\documentclass[11pt]{article}

\usepackage[left=1.2in,right=1.2in]{geometry}

\usepackage[utf8]{inputenc}
\usepackage[T1]{fontenc}
\usepackage{ifthen}
\usepackage{amssymb}
\usepackage{amsthm} 
\usepackage{bbm}
\usepackage{enumitem}
\usepackage{hyperref}

\usepackage{amsmath}
\usepackage{blkarray}

\usepackage[all]{xy}

\usepackage{xcolor}
\definecolor{notecolor}{rgb}{1,0,0}
\newcommand{\note}[1]{{\small\textcolor{notecolor}{(#1)}}}

\usepackage{tikz}
\usepackage{tkz-graph}
\usepackage{tikz-cd}
\usetikzlibrary{arrows.meta, positioning}
\usepackage{xparse}
\usepackage{mathrsfs}
\usepackage{graphicx}
\usepackage{hyperref}

\definecolor{linkcolor}{rgb}{0,0,0.8} 

{
\theoremstyle{plain}
\newtheorem{theorem}{Theorem}[section]
\newtheorem{lemma}[theorem]{Lemma}
\newtheorem{proposition}[theorem]{Proposition}

\newtheorem{example}[theorem]{Example}
\newtheorem{definition}[theorem]{Definition}

\newtheorem*{conjecture*}{Conjecture}
\newtheorem{question}[theorem]{Question}

}

{
\theoremstyle{definition}

\newtheorem{remark}[theorem]{Remark}
}

\newcommand{\sizedescriptor}[2]
{
\ifthenelse{\equal{#1}{0}}{}{
\ifthenelse{\equal{#1}{1}}{\big}{
\ifthenelse{\equal{#1}{2}}{\Big}{
\ifthenelse{\equal{#1}{3}}{\bigg}{
\ifthenelse{\equal{#1}{4}}{\Bigg}{
#2}}}}}
}

\newcommand{\someref}{{\small\textcolor{blue}{[\textbf{ref.}]}} }

\newcommand{\impl}{\Rightarrow}  
\newcommand{\all}[1]{\forall #1 .\,}  
\newcommand{\some}[1]{\exists #1 .\,}  

\newcommand{\df}[1]{\emph{\textbf{#1}}}  
\newcommand{\ism}{\cong}  

\NewDocumentCommand{\set}
	{O{auto} m G{\empty}}
	{\sizedescriptor{#1}{\left}\{ {#2} \ifthenelse{\equal{#3}{}}{}{ \; \sizedescriptor{#1}{\middle}| \; {#3}} \sizedescriptor{#1}{\right}\}}

\newcommand{\pst}{\mathcal{P}}  

\newcommand{\NN}{\mathbb{N}}

\newcommand{\RR}{\mathbb{R}}

\newcommand{\intoo}[3][\RR]{{#1}_{(#2, #3)}}
\newcommand{\intcc}[3][\RR]{{#1}_{[#2, #3]}}
\newcommand{\intoc}[3][\RR]{{#1}_{(#2, #3]}}
\newcommand{\intco}[3][\RR]{{#1}_{[#2, #3)}}

\newcommand{\er}{\overline{\RR}}

\NewDocumentCommand{\oball}  
	{O{\empty} G{\empty} G{\empty}}
	{B_{#1}\ifthenelse{\equal{#2}{}}{}{\!\left(#2, #3\right)}}
\NewDocumentCommand{\cball}  
	{O{\empty} G{\empty} G{\empty}}
	{\overline{B}_{#1}\ifthenelse{\equal{#2}{}}{}{\!\left(#2, #3\right)}}
\NewDocumentCommand{\cth}  
	{O{\empty} G{\empty} G{\empty}}
	{\overline{\mathrm{th}}_{#1}\ifthenelse{\equal{#2}{}}{}{\!\left(#2, #3\right)}}

\newcommand{\ph}{{\text{---}}}  
\newcommand{\parto}{\mathrel{\rightharpoonup}}  
\newcommand{\prj}{\mathrm{pr}}
\NewDocumentCommand{\dimg}  
	{O{\empty} m G{\empty}}
	{{#2}_*\ifthenelse{\equal{#3}{}}{}{\!\sizedescriptor{#1}{\left}( {#3} \sizedescriptor{#1}{\right})}}
\NewDocumentCommand{\pimg}  
	{O{\empty} m G{\empty}}
	{{#2}^*\ifthenelse{\equal{#3}{}}{}{\!\sizedescriptor{#1}{\left}( {#3} \sizedescriptor{#1}{\right})}}

\usepackage{mathtools}
\mathtoolsset{centercolon}

\newcommand{\mg}[1][]{\mathsf{mag}_{#1}}

\newcommand{\fsub}{\mathrm{Fin}}
\newcommand{\ifsub}{\fsub_{+}}
\newcommand{\csub}{\mathrm{Cmp}}
\newcommand{\icsub}{\csub_{+}}

\newcommand{\card}[1]{\mathnormal{\#}{#1}}

\newcommand{\sgn}{\mathrm{sgn}}
\newcommand{\decr}{\mathop{\text{\tiny$\searrow$}}}
\newcommand{\spr}[2]{\langle{#1}, {#2}\rangle}

\newcommand{\dr}[1][N]{\mathrm{Dir}_{#1}}
\newcommand{\dotted}[2]{{\dddot{#1}}({#2})}
\newcommand{\face}{\mathrm{face}}
\newcommand{\fdim}{\mathrm{fdim}}
\newcommand{\sub}{\mathrm{sub}}
\newcommand{\proj}{\mathrm{proj}}

\newcommand{\chg}{\mathrm{chg}}
\newcommand{\cc}{\overline{C}}

\newcommand{\fr}[2][F]{{#1}^{\sqsubset}_{#2}}
\newcommand{\dfr}[2][F]{\mathrm{df}(#1)_{#2}}

\title{Continuity of Magnitude at Finite Subsets of $\ell_1^N$}

\author{
Sara Kali\v{s}nik\thanks{
Pennsylvania State University,
\texttt{skalisnik@psu.edu}}\phantom{x} and 
Davorin Le\v{s}nik\thanks{
University of Ljubljana,
\texttt{davorin.lesnik@fmf.uni-lj.si}}
}

\date{}
\begin{document}

%
%

\maketitle

\begin{abstract}
Magnitude is an isometric invariant of metric spaces introduced by Leinster in 2011. It is nowhere continuous on the Gromov--Hausdorff space of finite metric spaces, however, positive continuity results do exist if we restrict the ambient space. In this paper, we prove that magnitude is continuous at every finite subset $F$ of $\ell_1^N$. We do this by first deriving the weight measure for a finite union of cubes, i.e.\ cubical thickenings of the points in $F$, and then showing that these thickenings converge to the magnitude of the underlying finite set.
\end{abstract}

\tableofcontents

\section{Introduction}
Magnitude is an isometric invariant of finite metric spaces that originally arose in category theory as an extension of Euler characteristic to enriched categories~\cite{leinster2010}. Since then it has been shown to encode, under suitable conditions, numerous geometric quantities of compact metric spaces such as dimension, perimeter, area, and volume~\cite{leinster2010, LW13, Willerton2014, W09, meckes2013}. It has also seen applications ranging 
from ecology~\cite{Solow1994, LC12} to topological data analysis~\cite{O18, OMALLEY2023107396, GH21}. 

In the context of data analysis, continuity of an invariant is central. If we consider the space of isometry classes of finite metric spaces and equip it with the Gromov--Hausdorff metric, magnitude is nowhere continuous~\cite{katsumasa2025magnitudegenericallycontinuousfinite, leinster2010, Roff25}. However, if we restrict to suitable subclasses of metric spaces, we may obtain continuity results. For example, recently Yuki Hiyoshi  considered finite metric spaces admitting a nonnegative weighting and showed within that class, the logarithmic magnitude function is Lipschitz continuous~\cite{hiyoshi2026continuitymagnitudefinitemetric}.

In~\cite{kalisnik2026continuitymagnitudeskewfinite} we have shown that magnitude is continuous at every finite subset of~$\ell_1^N$ which is `skew' --- meaning that all coordinate projections of the subset are injective. To do this, we used the following strategy. We showed that continuity of magnitude at a finite subset $F \subseteq \ell_1^N$ is implied by the property: magnitude of the union of cubes around each point of~$F$ tends to the magnitude of~$F$ as the size of the cubes tends to zero. We showed this property by deriving a formula for the weight measure and the magnitude of the union of cubes which have disjoint projections onto coordinate axis (which happens for sufficiently small cubes around the points of a skew subset).

Since skew finite subsets form a dense open subspace of the space of all finite subsets of~$\ell_1^N$ (in the Hausdorff distance), this showed that magnitude is continuous at ``almost all'' finite subsets of~$\ell_1^N$. In this paper, we generalize our methods so that we can drop the condition of `skewness', i.e.\ we show that magnitude is continuous at \emph{every} finite subset of~$\ell_1^N$.

The strategy in this paper is similar as in~\cite{kalisnik2026continuitymagnitudeskewfinite}. The first step towards the proof is to derive the weight measure of a finite union of disjoint cubes in $\ell_1^N$: for $p \in \ell_1^N$ and $r \geq 0$ we write $\cc_p(r) := {\prod_{k \in \intcc[\NN]{1}{N}} [p_k - r, p_k + r]}$ and, for a finite set $F \subseteq \ell_1^N$, we set $\cc_F(r) := \bigcup_{p \in F} \cc_p(r)$. For small positive~$r$, $\cc_F(r)$ admits a weight measure which can be expressed as a certain linear combination of the weight measures of the faces of cubes, see Theorem~\ref{theorem:weight-measure-of-union-of-cubes}. As cube faces are products of intervals/points, their weight measures are known.

Using this cube weight measure formula and Lemma~\ref{lemma:weight-limits}, we deduce that
\[
\lim_{r \decr 0} \mg\big(\cc_F(r)\big) = \mg(F)
\qquad
\text{for every finite $F \subseteq \ell_1^N$}.
\]
It follows that magnitude, restricted to~$\ell_1^N$, is continuous at every finite subset (Theorem~\ref{theorem:finite-magnitude-continuity}). 

While the two papers share this broad strategy, we stress certain important differences.
\begin{itemize}
\item
In~\cite{kalisnik2026continuitymagnitudeskewfinite}, we gave the weight measure of a union of cubes around points of a finite skew subspace in terms of Lebesgue and Dirac measures. While this could in principle be done also for general finite subspaces (cf.~Example~\ref{example:pair}), it would be very difficult to prove the above results in this way. Both the theoretical derivations and practical computations simplify significantly when the cube union weight measure is given in terms of the weight measures of the cube faces. This is because of the relatively simple formula, given in Lemma~\ref{lemma:face-weight-measure-integration}.
\item
The linear system, giving us the coefficients in formula for the weight measure of a cube union, is much more complicated in the general case. In~\cite{kalisnik2026continuitymagnitudeskewfinite}, we could give it in terms of `corners'. These needed to be generalized in two different ways, so that we obtained `fragments' (Definition~\ref{definition:fragments}) and `dotted fragments' (Definition~\ref{definition:dotted-fragments}).
\item
The crucial part of the magnitude continuity argument is that the coefficient matrix of the linear system, determining the coefficients in the formula for the cube union weight measure, is invertible at (and hence in some neighborhood of) $r = 0$. In~\cite{kalisnik2026continuitymagnitudeskewfinite}, we did this in Lemma~6.1, essentially by explicitly giving the unique solution of the system. This approach did not lend well to generalization. Instead, we needed two complicated induction, see Lemma~\ref{lemma:fragment-system-solvable-at-zero} and its proof.
\end{itemize}

\subsection{Notation and Conventions}\label{sub:notation}

We denote the set of natural numbers by~$\NN$. We treat~$0$ as a natural number, so $\NN = \set{0, 1, 2, 3,\ldots}$.

We denote the set of real numbers by~$\RR$, and the set of extended real numbers (with infinities included) by~$\er$. That is, $\er = \RR \cup \set{-\infty, \infty}$.

Subsets, given by a relation, are denoted by that relation in the index; for example, $\RR_{\geq 0}$ is the set of non-negative real numbers. Intervals between two numbers are denoted by these two numbers in brackets and in the index. Round, or open, brackets $(\ )$ denote the absence of the boundary in the set, and square, or closed, brackets $[\ ]$ its presence. For example, $\intcc{0}{1} = \set{x \in \RR}{0 \leq x \leq 1}$ is the usual closed unit interval, and $\intco[\NN]{5}{10} = \set{n \in \NN}{5 \leq n < 10} = \set{5, 6, 7, 8, 9}$.

For any set~$A$, we denote its cardinality by~$\card{A}$.

The powerset of a set~$A$ (the set of all subsets of~$A$) is denoted by~$\pst(A)$. We denote the set of finite subsets of~$A$ by~$\fsub(A)$, and the set of non-empty finite subsets by~$\ifsub(A)$. If $A$ is also equipped with a topology, then $\csub(A)$ denotes the set of compact subspaces of~$A$, and $\icsub(A)$ the set of non-empty compact subspaces of~$A$.

A function $f$ mapping from a set~$A$ to a set~$B$ is denoted as $f\colon A \to B$. If $f$ is merely a partial map (not necessarily defined on the whole~$A$), we write this as $f\colon A \parto B$.


For any $N \in \NN$ and $k \in \intcc[\NN]{1}{N}$, we use $\prj_k\colon \RR^N \to \RR$ to denote the $k$-th coordinate projection map, i.e.\ $\prj_k(x_1, \ldots, x_N) = x_k$.

We use $\spr{a}{b}$ to denote the usual scalar product (or dot product, or inner product) of vectors $a, b \in \RR^N$, i.e.\ $\spr{a}{b} = \sum_{k \in \intcc[\NN]{1}{N}} a_k\:\!b_k$.

For any $p \in \er_{\geq 1}$ and $N \in \NN$, let $\ell_p^N$ denote the Banach space~$\RR^N$, equipped with the $p$-norm (and the induced $p$-metric), and let $\ell_p$ denote the infinite-dimensional version of this space. Moreover, $L_1$ is the shorthand for $L_1\big(\intcc{0}{1}, \RR\big)$, i.e.\ the space of (equivalence classes of) measurable functions $\intcc{0}{1} \to \RR$, equipped with the usual integral $1$-norm. We will use $d_1$ to denote the $1$-metric and $\|\ \|_1$ the $1$-norm on these spaces.


Given a metric space~$M$ with a metric~$d$, the open ball in~$M$ with the center $x \in M$ and radius $r$ is denoted by $\oball[M]{x}{r}$, likewise for the closed ball $\cball[M]{x}{r}$. We shorten the notation for a ball in $p$-metric to $\oball[p]{x}{r}$, resp.~$\cball[p]{x}{r}$. In particular, we have
\[\cball[\infty]{x}{r} \ =\!\!\prod_{k \in \intcc[\NN]{1}{N}}\!\!\intcc{x_k - r}{x_k + r},\]
and we denote this (closed) cube by $\cc_x(r)$.


We write $\displaystyle{\lim_{x \decr a} f(x)}$ for the right-sided limit (i.e.\ the limit when $x$ approaches~$a$ from above).

Write $1_N$ (resp.~$0_N$) for the vector in~$\RR^N$ with all components~$1$ (resp.~$0$). More generally, let $1_F$ (resp.~$0_F$) denote the ``vector'' whose components are indexed by a finite set~$F$ (formally, a function $F \to \RR$), and are all equal to~$1$ (resp.~$0$).

For any matrix~$A$, we write $\mathrm{sum}(A)$ for the sum of all its entries. This includes matrices with a single column, i.e.\ $\mathrm{sum}(v)$ is the sum of all components of a vector~$v$.

\section{Magnitude}

In this section, we provide the background information on magnitude and state some standard results concerning the magnitude of finite and compact metric spaces~\cite{leinster2010, meckes2013}.

We first define magnitude of finite metric spaces~\cite{leinster2010}.
\begin{definition}\label{definition:finite-metric-space-magnitude}
Let $F$ be a finite set, equipped with a metric~$d$. The matrix $Z_F \in \RR^{F \times F}$, given by $Z_F := \big(e^{-d(p, q)}\big)_{p, q \in F}$, is called the \df{similarity matrix} of~$F$. A vector $w \in \RR^F$ is a \df{weighting} for $Z_F$ when $Z_F \cdot w = 1_F$, where $1_F \in \RR^F$ is the vector with all components equal to~$1$. If a weighting~$w$ for $Z_F$ exists, the \df{magnitude} of the metric space~$F$ is defined to be
\[\mg(F) := \mathrm{sum}(w),\]
i.e.\ the sum of all components of the weighting~$w$ (this sum is independent of the choice of the weighting~\cite{leinster2010}).

If $Z_F$ is invertible, there exists a unique weighting $w = Z_F^{-1} \cdot 1_{F}$ and in that case the magnitude of~$F$ equals the sum of all entries of the matrix~$Z_F^{-1}$.
\end{definition}

Let $\fsub(M)$ denote the set of all finite subsets of~$M$ equipped with the Hausdorff distance~$d_H$. This is an extended metric as the empty subset is at infinite distance from the others. If we restrict ourselves to the set of all \emph{non-empty} subsets of~$M$, which we denote by~$\ifsub(M)$, the Hausdorff distance becomes a metric. 

Magnitude induces a partial function $\mg[\fsub(M)]\colon \fsub(M) \parto \RR$. It is continuous if and only if its restriction to~$\ifsub(M)$, denoted $\mg[\ifsub(M)]$, is continuous. This holds since $\emptyset$ is an isolated point in~$\fsub(M)$.

Not every similarity matrix has a weighting, so the magnitude is not defined for every finite metric space. One class of finite metric spaces for which the existence of magnitude is guaranteed are positive definite finite metric spaces, i.e.\ finite metric spaces that have positive definite similarity matrices. Sylvester's criterion then ensures that its every subspace has this property as well. The following is a generalization of this property to metric spaces that are not necessarily finite.

\begin{definition}
A metric space~$M$ is \df{positive definite} when its every finite subspace has a positive definite similarity matrix.
\end{definition}

Examples of positive definite metric spaces, relevant to this paper, are spaces $\ell_1^N$ for all dimensions $N \in \NN$, and all of their subspaces.

If $M$ is positive definite, the maps $\mg[\fsub(M)]$ and $\mg[\ifsub(M)]$ are total. Magnitude has other additional desirable properties~\cite{meckes2013}. For example, it is \df{inclusion-monotone}, i.e.\ for all $F', F'' \in \fsub(M)$, if $F' \subseteq F''$, then $\mg(F') \leq \mg(F'')$~\cite[
Corollary 2.4.4]{leinster2010}.

The definition of magnitude may be extended to compact metric spaces in various ways~\cite{leinster2010, LW13, Willerton2014, W09, meckes2013} and these ways have been shown to be equivalent for positive definite spaces, but not general ones~\cite{meckes2013}.  The approach we use in our paper is via weight measures~\cite[Section 2.1]{Willerton2014}.

\begin{definition}
Let $(K, d)$ be a compact positive definite metric space. 
A finite signed Borel measure $\omega$ on $K$ is a \df{weight measure} 
when it satisfies
\[
\int_K e^{-d(x, a)} \, d\omega(x) = 1
\quad \text{for all } a \in K.
\]
If a compact metric space $K$ admits a weight measure $\omega$, 
then its magnitude is given by
\[
\mg(K) = \omega(K) = \int_K d\omega(x).
\]
\end{definition}

This definition allows us to define $\mg\colon \csub(M) \parto \RR$ (or $\mg[\csub(M)]$ for short) where $\csub(M)$ stands for the set of all compact subspaces of~$M$. We equip $\csub(M)$ with the Hausdorff distance which becomes a metric when we restrict ourselves to the set of all \emph{non-empty} compact subspaces of~$M$, $\icsub(M)$. Again, because $\emptyset$ is an isolated point~$\mg[\csub(M)]$  is continuous precisely when~$\mg[\icsub(M)]$ is.

For positive definite compact metric spaces, weight measures are unique. 
\begin{proposition}\label{proposition:uniqueness-of-weight-measure}
A positive definite compact metric space~$K$ can have at most one weight measure. This includes $K \in \csub(\ell_1^N)$ for all $N \in \NN$.
\end{proposition}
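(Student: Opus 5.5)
Suppose $\omega_1$ and $\omega_2$ are both weight measures on~$K$; we will show that the finite signed Borel measure $\mu := \omega_1 - \omega_2$ vanishes. By linearity of the integral,
\[
\int_K e^{-d(x, a)} \, d\mu(x) = 1 - 1 = 0 \qquad \text{for all } a \in K .
\]
Integrating this identity against $\mu$ in the variable~$a$ (legitimate by Fubini, since the kernel is bounded and continuous and $\mu$ is finite) gives the ``energy'' identity
\[
E(\mu) := \int_K \int_K e^{-d(x, a)} \, d\mu(x)\, d\mu(a) = 0 .
\]
The plan is to use positive definiteness of~$K$ to show that $E(\mu) = 0$ forces $\mu = 0$.

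First I will show that the kernel $k(x,a) = e^{-d(x,a)}$ is positive semidefinite in the integral sense, i.e.\ $E(\nu) \geq 0$ for every finite signed Borel measure~$\nu$. For finitely supported measures this is exactly positive definiteness of the similarity matrices of finite subspaces. For general~$\nu$, I approximate it weak-$*$ by finitely supported measures: partition compact~$K$ into finitely many Borel pieces of small diameter and put the $\nu$-mass of each piece at a chosen point of it. Uniform continuity of~$k$ on $K \times K$ gives convergence of the double integrals, so $E(\nu) \geq 0$.

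Next comes the step from ``$E(\mu) = 0$'' to ``$\mu = 0$'', which I expect to be the main obstacle. Pure semidefiniteness only gives that $\mu$ lies in the null space of the associated form. Indeed, Cauchy--Schwarz for the semi-inner product $\langle \nu, \nu' \rangle = \iint k \, d\nu \, d\nu'$ yields $\langle \mu, \nu \rangle = 0$ for all~$\nu$. Taking $\nu = \delta_a$ just returns the weighting equations, so it gives nothing new. What is needed is that $k$ is \emph{strictly} positive definite on measures, not merely on finite sets.

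My first attempt would use the structure of the kernel. By Meckes' theory, a positive definite metric space embeds so that $e^{-d}$ is a positive definite kernel with a reproducing kernel Hilbert space~$H$ (for $\ell_1^N$ this is explicit: $e^{-\|x-a\|_1} = \prod_k e^{-|x_k - a_k|}$, and $e^{-|t|}$ has strictly positive Fourier transform $\tfrac{2}{1+\xi^2}$). In the $\ell_1^N$ case, Bochner's formula gives
\[
E(\mu) = c \int_{\RR^N} |\hat\mu(\xi)|^2 \prod_k \frac{2}{1+\xi_k^2} \, d\xi .
\]
Since the density is strictly positive, $E(\mu) = 0$ forces $\hat\mu \equiv 0$, hence $\mu = 0$ by Fourier uniqueness.

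For a general positive definite compact~$K$, I would instead rely on Meckes' result that such spaces are of negative type, and his result that the kernel embedding $\nu \mapsto \int k(\cdot, x)\, d\nu(x)$ into $H$ is injective. If necessary, I would restrict the claim to the cited literature~\cite{meckes2013} for the general statement and give the Fourier argument above for the explicitly needed case $K \in \csub(\ell_1^N)$.
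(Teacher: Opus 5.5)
The paper gives no argument of its own here; it only cites Corollary~1 of~\cite{bouafia2026magnitudediversitytrees} and the discussion following it.

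Your treatment of $K \in \csub(\ell_1^N)$ is correct and takes a genuinely different, self-contained route. It also covers every use of the proposition in the paper: Theorem~\ref{theorem:weight-measure-of-union-of-cubes}, the example with $F = \{(0,0),(3,1)\}$, and the Conclusion.

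The energy identity is exactly the right device. The function $\int e^{-d_1(x,a)}\,d\mu(x)$ is only known to vanish for $a \in K$, so you cannot Fourier-transform it directly. Write $\rho(\xi) = \pi^{-N}\prod_k (1+\xi_k^2)^{-1}$, which is integrable and strictly positive. Fubini is then justified by $\rho \in L^1$ and the finiteness of $|\mu|$, and gives $E(\mu) = \int \rho\,|\hat\mu|^2$. You should note that continuity of $\hat\mu$ upgrades vanishing a.e.\ to vanishing everywhere before you invoke Fourier uniqueness. In this case your discretization argument for $E \geq 0$ is not needed at all.

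For the general statement, however, you have no proof. You correctly observe two things: finite-set positive definiteness plus discretization gives only $E(\nu) \geq 0$, and what is needed is $E(\nu) > 0$ for every nonzero finite signed measure. But neither fact you attribute to Meckes supplies this.

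First, positive definiteness does not imply negative type. Negative type corresponds (via Schoenberg) to positive definiteness of $tK$ for \emph{all} $t > 0$, whereas the hypothesis concerns $t = 1$ only. For instance, $K_{2,3}$ with its graph metric is not of negative type: weights $3,3$ on one side and $-2,-2,-2$ on the other give $\sum_{i,j} c_i c_j d_{ij} = 12 > 0$. Yet $tK_{2,3}$ is positive definite for large $t$, since its similarity matrix tends to the identity.

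Second, injectivity of $\nu \mapsto \int e^{-d(\cdot, x)}\,d\nu(x)$ on finite signed measures is not an auxiliary fact; it is a restatement of the proposition. If $\omega$ is a weight measure and $\nu \neq 0$ lies in the kernel, then $\omega + \nu$ is another weight measure. This injectivity is equivalent to density of the reproducing kernel Hilbert space in $C(K)$. For general continuous kernels it does not follow from strict positive definiteness on finite sets.

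So for arbitrary positive definite compact $K$ you have two options. You can cite a source that actually proves uniqueness, as the paper does. Or you can restrict your claim to $K \in \csub(\ell_1^N)$, where your argument is complete. More generally it also works for compact subsets of $\RR^N$ with a translation-invariant kernel of positive spectral density, e.g.\ $\ell_2^N$.
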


\begin{proof}
See Corollary~1 and the subsequent discussion at the beginning of Subsection~2.3 in~\cite{bouafia2026magnitudediversitytrees}.
\end{proof}

Because of this uniqueness, we will use $\omega_K$ to denote the weight measure of a positive definite compact metric space~$K$ whenever this measure exists.

\section{Cubes}\label{section:cubes}

In this section we introduce the notation and discuss the basic properties of cubes in~$\ell_1^N$.

For any $p \in \ell_1^N$ and $r \in \RR_{\geq 0}$, let
\[\cc_p(r) := \prod_{k \in \intcc[\NN]{1}{N}} \intcc{p_k - r}{p_k + r}.\]
That is, $\cc_p(r)$ is (at least when $r > 0$) an $N$-dimensional closed cube whose edges are aligned with coordinate axes. In this paper, these are the only kind of cubes we are interested in, so when we use the term `cube', this is the kind of cube we mean. The point~$p$ is called the \df{center} of the cube and $r$ its \df{radius}.

For any $F \in \fsub(\ell_1^N)$ and $r \in \RR_{\geq 0}$, let $\cc_F(r) := \bigcup_{p \in F} \cc_p(r)$ (the union of cubes centered at points in~$F$).

Cubes are in particular polytopes, and we may speak of their faces. To this end, the following notation will be convenient.

Let
\begin{align*}
L_{-1} &:= -1, & R_{-1} &:= -1, \\
L_0 &:= -1, & R_0 &:= 1, \\
L_1 &:= 1, & R_1 &:= 1.
\end{align*}
Then, for every $p \in \ell_1^N$, $r \in \RR_{\geq 0}$ and $s \in \set{-1, 0, 1}^N$, define
\[\face_{p, s}(r) := \prod_{k \in \intcc[\NN]{1}{N}} \intcc{p_k + r L_{s_k}}{p_k + r R_{s_k}}.\]
That is, $\face_{p, s}(r)$ is the face of the cube $\cc_p(r)$ which has the center in $p + r s$. Its dimension is
\[\fdim_s := \sum_{k \in \intcc[\NN]{1}{N}} \big(1 - |s_k|\big) = N - \|s\|_1,\]
i.e.\ the number of zeroes in~$s$.

Note that $\face_{p, 0_N}(r)$ is the whole cube $\cc_p(r)$. Denote the other directions by
\[\dr := \set{-1, 0, 1}^N \setminus \set{0}^N;\]
then faces $\face_{p, s}(r)$ for $s \in \dr$ are precisely the proper faces of the cube $\cc_p(r)$. We will also want notation for the set of centers of the proper faces in a finite union of cubes $\cc_F(r)$:
\[\dotted{F}{r} := \set{p + r s}{p \in F \land s \in \dr}.\]

\begin{remark}
The illustrative text above only really makes sense for $r > 0$ (for $r = 0$, the sets $\cc_p(r)$ and $\face_{p, s}(r)$ are singletons). Nevertheless, we want the formal definitions to be exactly as they are given above, including for $r = 0$ (yes, even the ``face dimension'' $\fdim_s$), as we need the system of linear equations, given in Definition~\ref{definition:fragment-system} below, to be defined also for $r = 0$, and its coefficients to be continuous (with respect to~$r$) there.
\end{remark}

For any $s \in \set{-1, 0, 1}^N$, let
\[\sub_s := \set[1]{t \in \set{-1, 0, 1}^N}{\all{k \in \intcc[\NN]{1}{N}}{s_k \neq 0 \impl t_k = s_k}}.\]
Intuitively, $\sub_s$ consists of indices of all subfaces of the face, indexed by~$t$. More precisely, the following holds.

\begin{proposition}
Let $N \in \NN$. The following statements are equivalent for all $s, t \in \set{-1, 0, 1}^N$.
\begin{enumerate}[label=(\roman*)]
\item
$t \in \sub_s$
\item
$\all{p \in \ell_1^N}\all{r \in \RR_{> 0}}{\face_{p, t} \subseteq \face_{p, s}}$
\item
$\some{p \in \ell_1^N}\some{r \in \RR_{> 0}}{\face_{p, t} \subseteq \face_{p, s}}$
\end{enumerate}
\end{proposition}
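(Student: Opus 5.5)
The plan is to prove the chain (i)$\impl$(ii)$\impl$(iii)$\impl$(i). Throughout, $\face_{p,t}$ means $\face_{p,t}(r)$ for the $r$ in question. The key observation is that $\face_{p,s}(r)$ is a product of intervals: its $k$-th factor is $\intcc{p_k + r L_{s_k}}{p_k + r R_{s_k}}$. When $r>0$, this factor is the singleton $\set{p_k - r}$ if $s_k=-1$, the singleton $\set{p_k+r}$ if $s_k=1$, and the full interval $\intcc{p_k-r}{p_k+r}$ if $s_k = 0$. A product of non-empty sets is contained in another product of sets exactly when each factor is contained in the corresponding factor. So the whole statement reduces to a coordinatewise comparison of these three kinds of interval.

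For (i)$\impl$(ii), fix $p$ and $r>0$ and take $t \in \sub_s$. I will check $\intcc{p_k + rL_{t_k}}{p_k + rR_{t_k}} \subseteq \intcc{p_k + rL_{s_k}}{p_k + rR_{s_k}}$ for each $k$. If $s_k \neq 0$, then $t_k = s_k$ by the definition of $\sub_s$, so the two factors are equal. If $s_k = 0$, the right-hand factor is the whole $\intcc{p_k-r}{p_k+r}$. The left-hand factor is always a subset of it, since $-1 \le L_{t_k} \le R_{t_k} \le 1$ for every $t_k \in \set{-1,0,1}$. Taking the product over $k$ gives the inclusion. The implication (ii)$\impl$(iii) is trivial, because $\ell_1^N$ and $\RR_{>0}$ are non-empty; for instance take $p = 0_N$ and $r = 1$.

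For (iii)$\impl$(i), suppose $\face_{p,t}(r) \subseteq \face_{p,s}(r)$ for some $p$ and some $r>0$. Projecting with $\prj_k$ gives the coordinatewise inclusion $\intcc{p_k + rL_{t_k}}{p_k + rR_{t_k}} \subseteq \intcc{p_k + rL_{s_k}}{p_k + rR_{s_k}}$, using that the projection of a product of non-empty factors is exactly the $k$-th factor. Now take $k$ with $s_k \neq 0$; the right-hand side is then the singleton $\set{p_k + r s_k}$. If $t_k = 0$, the left side is an interval of positive length $2r$, which cannot sit inside a singleton. If $t_k = -s_k$, the left side is $\set{p_k - r s_k}$, which differs from $p_k + rs_k$ because $r>0$. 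Hence $t_k = s_k$, and so $t \in \sub_s$.

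There is no real obstacle here. The only point needing care is that the argument genuinely uses $r>0$, which is why (ii) and (iii) restrict to $\RR_{>0}$; at $r=0$ all faces collapse to $\set{p}$ and the equivalence fails. The other point is the fact that inclusion of products reduces to inclusion of factors, which needs all factors non-empty, and they are.
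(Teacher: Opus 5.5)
Your proof is correct and follows essentially the same approach as the paper, which simply states that the equivalence is straightforward from the definitions. Your coordinatewise comparison of the factors $\intcc{p_k + rL_{t_k}}{p_k + rR_{t_k}}$ and $\intcc{p_k + rL_{s_k}}{p_k + rR_{s_k}}$, using $r>0$ in the implication (iii)$\impl$(i), is exactly the unpacking of that definitional check.
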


\begin{proof}
Straightforward from the definitions.
\end{proof}

Cubes in~$\ell_1^N$ turn out to be useful because it is possible to project onto them (and their faces) in a nice way.

For any $p \in \ell_1^N$, $r \in \RR_{\geq 0}$ and $s \in \set{-1, 0, 1}^N$, define the map $\proj_{p, s, r}\colon \ell_1^N \to \ell_1^N$ by
\[\proj_{p, s, r}(x) := \Big(\min\set[1]{\max\set{x_k, p_k + r L_{s_k}}, p_k + r R_{s_k}}\Big)_{k \in \intcc[\NN]{1}{N}}.\]
Clearly we always have $\proj_{p, s, r}(x) \in \face_{p, s}(r)$; in fact $\proj_{p, s, r}$ is the projection onto $\face_{p, s}(r)$. More precisely, the following holds.

\begin{proposition}\label{proposition:face-projection}
Let $N \in \NN$, $p \in \ell_1^N$, $r \in \RR_{\geq 0}$, $s \in \set{-1, 0, 1}^N$. Then, for every $x \in \ell_1^N$ and every $y \in \face_{p, s}(r)$, we have
\[d_1(x, y) = d_1\big(x, \proj_{p, s, r}(x)\big) + d_1\big(\proj_{p, s, r}(x), y\big).\]
Hence $d_1\big(x, \face_{p, s}(r)\big) = d_1\big(x, \proj_{p, s, r}(x)\big)$.
\end{proposition}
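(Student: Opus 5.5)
The plan is to reduce everything to one dimension, using that the $\ell_1$ metric is a sum of coordinatewise distances and that $\face_{p, s}(r)$ and $\proj_{p, s, r}$ are both defined coordinatewise. Writing $q := \proj_{p, s, r}(x)$, we have
\[d_1(x, y) = \sum_{k \in \intcc[\NN]{1}{N}} |x_k - y_k|, \qquad d_1(x, q) + d_1(q, y) = \sum_{k \in \intcc[\NN]{1}{N}} \big(|x_k - q_k| + |q_k - y_k|\big).\]
It therefore suffices to prove, for each $k$, the one-dimensional identity $|x_k - y_k| = |x_k - q_k| + |q_k - y_k|$. This holds exactly when $q_k$ lies between $x_k$ and $y_k$.

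Fix $k$ and set $a := p_k + r L_{s_k}$ and $b := p_k + r R_{s_k}$. From the table defining $L$ and $R$ we have $L_{s_k} \leq R_{s_k}$, and since $r \geq 0$ this gives $a \leq b$. By definition $q_k = \min\set{\max\set{x_k, a}, b}$, which is the clamp of $x_k$ to $\intcc{a}{b}$, and $y \in \face_{p, s}(r)$ means $y_k \in \intcc{a}{b}$.

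We then split into three cases.
\begin{itemize}
\item If $x_k \in \intcc{a}{b}$, then $q_k = x_k$ and the identity is trivial.
\item If $x_k < a$, then $q_k = a \leq y_k$, so $x_k < q_k \leq y_k$ and $q_k$ is between them.
\item If $x_k > b$, the argument is symmetric: $q_k = b \geq y_k$, so $y_k \leq q_k < x_k$.
\end{itemize}
Summing the coordinatewise identities over $k$ gives the first claim.

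For the consequence, observe that $q \in \face_{p, s}(r)$, so $d_1\big(x, \face_{p, s}(r)\big) \leq d_1(x, q)$. Conversely, for every $y$ in the face, the main identity gives $d_1(x, y) \geq d_1(x, q)$. Taking the infimum over $y$ yields equality, and the infimum is attained at $q$.

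I do not expect a genuine obstacle here. The only points that need care are the check $a \leq b$, which is what makes the clamp a true projection onto the interval, and the degenerate cases: $s_k = \pm 1$ gives a single point, and $r = 0$ makes the whole face a single point. Both are covered by the same clamp argument, because it only uses $a \leq b$.
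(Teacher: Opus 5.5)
Your proof is correct and follows essentially the same route as the paper: reduce to a single coordinate, split into three cases according to where $x_k$ lies relative to the interval, sum over $k$, and then take the infimum over $y$ to get the distance formula. Your version is slightly more careful, since you work with the general interval $[a, b] = [p_k + r L_{s_k}, p_k + r R_{s_k}]$. This explicitly covers the coordinates with $s_k = \pm 1$, which the paper's one-dimensional computation (written only for $[p - r, p + r]$) leaves implicit.
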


\begin{proof}
Consider first the case $N = 1$. If $x \leq p - r$, then
\begin{gather*}
d_1\big(x, \proj_{p, s, r}(x)\big) + d_1\big(\proj_{p, s, r}(x), y\big) = d_1(x, p - r) + d_1(p - r, y) \\
= p - r - x + y - (p - r) = y - x = d_1(x, y).
\end{gather*}
Similarly, if $x \geq p + r$, then
\begin{gather*}
d_1\big(x, \proj_{p, s, r}(x)\big) + d_1\big(\proj_{p, s, r}(x), y\big) = d_1(x, p + r) + d_1(p + r, y) \\
= x - (p + r) + p + r - y = x - y = d_1(x, y).
\end{gather*}
Finally, if $p - r \leq x \leq p + r$, then
\begin{gather*}
d_1\big(x, \proj_{p, s, r}(x)\big) + d_1\big(\proj_{p, s, r}(x), y\big) = d_1(x, x) + d_1(x, y) = 0 + d_1(x, y) = d_1(x, y).
\end{gather*}
We get the case for general~$N$ by just summing this result over all components.

It follows that
\begin{gather*}
d_1\big(x, \face_{p, s}(r)\big) = \inf\set[1]{d_1(x, y)}{y \in \face_{p, s}(r)} \\
= \inf\set[1]{d_1\big(x, \proj_{p, s, r}(x)\big) + d_1\big(\proj_{p, s, r}(x), y\big)}{y \in \face_{p, s}(r)} \\
= d_1\big(x, \proj_{p, s, r}(x)\big) + \inf\set[1]{d_1\big(\proj_{p, s, r}(x), y\big)}{y \in \face_{p, s}(r)} \\
= d_1\big(x, \proj_{p, s, r}(x)\big)
\end{gather*}
since $d_1\big(\proj_{p, s, r}(x), y\big) \geq 0$ and specifically for $y = \proj_{p, s, r}(x)$ we have $d_1\big(\proj_{p, s, r}(x), y\big) = 0$.
\end{proof}

Consider what happens when we project from a cube to one of its faces.

\begin{proposition}\label{proposition:face-projection-from-cube-itself}
Let $N \in \NN$, $p \in \ell_1^N$, $r \in \RR_{\geq 0}$ and $s \in \set{-1, 0, 1}^N$. Then, for every $t \in \intcc{-1}{1}^N$,
\[d_1\big(p + r t, \face_{p, s}(r)\big) = r \big(\|s\|_1 - \spr{s}{t}\big).\]
\end{proposition}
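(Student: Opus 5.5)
The plan is to reduce to a coordinatewise computation via Proposition~\ref{proposition:face-projection}. By that proposition, $d_1\big(x, \face_{p, s}(r)\big) = d_1\big(x, \proj_{p, s, r}(x)\big)$ with $x := p + r t$. Since $d_1$ is a sum over coordinates and $\proj_{p, s, r}$ acts coordinatewise, it suffices to show, for each $k \in \intcc[\NN]{1}{N}$,
\[
\big|x_k - \proj_{p, s, r}(x)_k\big| = r\big(|s_k| - s_k t_k\big).
\]
Summing over $k$ then yields $r\big(\|s\|_1 - \spr{s}{t}\big)$, because $\sum_k |s_k| = \|s\|_1$ and $\sum_k s_k t_k = \spr{s}{t}$.

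For the coordinatewise identity, write $x_k = p_k + r t_k$ with $t_k \in \intcc{-1}{1}$. The $k$-th coordinate of the projection is $x_k$ clamped to $\intcc{p_k + r L_{s_k}}{p_k + r R_{s_k}}$, and there are three cases.

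\begin{itemize}
\item If $s_k = 0$, the interval is $\intcc{p_k - r}{p_k + r}$. Since $|t_k| \leq 1$ and $r \geq 0$, $x_k$ lies in it and is not moved, so the distance is $0 = r(0 - 0)$.
\item If $s_k = 1$, the interval is the single point $p_k + r$, so the distance is $|r t_k - r| = r(1 - t_k)$ because $t_k \leq 1$. This equals $r\big(|s_k| - s_k t_k\big)$.
\item If $s_k = -1$, the interval is the single point $p_k - r$, so the distance is $|r t_k + r| = r(1 + t_k)$ because $t_k \geq -1$. This again equals $r\big(|s_k| - s_k t_k\big)$.
\end{itemize}

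The only point needing care is using $t \in \intcc{-1}{1}^N$ and $r \geq 0$ to drop the absolute values, and the argument also covers the degenerate case $r = 0$, where both sides vanish. There is no real obstacle.
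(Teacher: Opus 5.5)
Your proposal is correct and follows essentially the same route as the paper. Both reduce to the distance to $\proj_{p, s, r}(p + r t)$ via Proposition~\ref{proposition:face-projection}, compute each coordinate in the three cases $s_k \in \set{-1, 0, 1}$ to get $r\big(|s_k| - s_k t_k\big)$, and sum over $k$.
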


\begin{proof}
By the following calculation (for the first equality, use Proposition~\ref{proposition:face-projection}):
\begin{gather*}
d_1\big(p + r t, \face_{p, s}(r)\big) = d_1\big(p + r t, \proj_{p, s, r}(p + r t)\big) \\
= \sum_{k \in \intcc[\NN]{1}{N}} \Big|p_k + r t_k - \big(\proj_{p, s, r}(p + r t)\big)_k\Big| = \sum_{k \in \intcc[\NN]{1}{N}} \begin{cases} 0 & \text{if $s_k = 0$}, \\ r (1 - t_k) & \text{if $s_k = 1$}, \\ r (1 + t_k) & \text{if $s_k = -1$} \end{cases} \\
= \sum_{k \in \intcc[\NN]{1}{N}} r \big(|s_k| - s_k t_k\big) = r \big(\|s\|_1 - \spr{s}{t}\big).
\end{gather*}
\end{proof}

Let us now consider the weight measure of cubes. Given a measurable subset $S \subseteq \ell_1^N$ and $D \in \NN$, we use $\lambda_S^D$ to denote the $D$-dimensional Lebesgue measure on~$S$. In particular, the $0$-dimensional Lebesgue measure is the counting measure.

In this paper, we establish the convention that any measure, defined on a subset of~$\ell_1^N$, is understood to also be a measure on the whole~$\ell_1^N$, in the sense that everything outside of the subset has measure~$0$. In particular, for any measurable $A \subseteq \ell_1^N$, we have $\lambda_S^D(A) = \lambda_S^D(A \cap S)$. Following this convention, for any $p \in \ell_1^N$ we have $\lambda_{\set{p}}^0 = \delta_p$ where $\delta_p$ is the Dirac measure, centered on~$x$. More generally, for any $F \in \fsub(\ell_1^N)$ we have $\lambda_F^0 = \sum_{p \in F} \delta_p$.

Given $a, b \in \RR$ with $a \leq b$, it is known by~\cite[
Theorem 2]{Willerton2014} that the weight measure of the interval $\intcc{a}{b}$ is
\[\omega_{\intcc{a}{b}} = \tfrac{1}{2} \big(\delta_a + \lambda_{\intcc{a}{b}}^1 + \delta_b\big),\]
from which we get $\mg(\intcc{a}{b}) = 1 + \tfrac{b - a}{2}$. Since the weight measure (resp.~magnitude) of a $1$-product is the product of weight measures (resp.~magnitudes) of the factors (see~\cite[Proposition 5.3.7.]{LM17}), we immediately get the weight measure of a cube $\cc_p(r)$:
\[\omega_{\cc_p(r)} = \prod_{k \in \intcc[\NN]{1}{N}}\!\!\!\!\tfrac{1}{2} \big(\delta_{p_k - r} + \lambda_{\intcc{p_k - r}{p_k + r}}^1 + \delta_{p_k + r}\big) = \tfrac{1}{2^N}\!\!\!\!\!\!\sum_{s \in \set{-1, 0, 1}^N}\!\!\!\!\!\!\lambda_{\face_{p, s}(r)}^{\fdim_s},\]
and the magnitude $\mg(\cc_p(r)) = (1 + r)^N$.

More generally, faces of cubes are also product sets, so their weight measure and magnitude can also be calculated in this way. Taking into account that the weight measure of a singleton is the Dirac measure, centered at that singleton, we get for any $p \in \ell_1^N$, $r \in \RR_{> 0}$ and $s \in \set{-1, 0, 1}^N$
\[\omega_{\face_{p, s}(r)} = \tfrac{1}{2^{\fdim_s}} \sum_{t \in \sub_s} \lambda_{\face_{p, t}(r)}^{\fdim_t},\]
and $\mg(\face_{p, s}(r)) = (1 + r)^{\fdim_s}$.

By definition of a weight measure, we have $\int_{\face_{p, s}(r)} e^{-d_1(x, q)} \, d\omega_{\face_{p, s}(r)}(x) = 1$ for every $q \in \face_{p, s}(r)$. However, the fact that we can project nicely onto cube faces allows us to give a simple formula for this integral also for general $q \in \ell_1^N$. This will be useful later in the derivation of the formula for the weight measure of a union of cubes.

\begin{lemma}\label{lemma:face-weight-measure-integration}
Let $N \in \NN$, $p \in \ell_1^N$, $r \in \RR_{\geq 0}$ and $s \in \set{-1, 0, 1}^N$. Then, for every $q \in \ell_1^N$,
\[\int_{\face_{p, s}(r)} e^{-d_1(x, q)} \, d\omega_{\face_{p, s}(r)}(x) = e^{-d_1(\face_{p, s}(r), q)}.\]
\end{lemma}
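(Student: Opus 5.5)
The plan is to reduce the integral to an integral over the face against a point of the face itself, using the projection decomposition from Proposition~\ref{proposition:face-projection}. Write $y := \proj_{p, s, r}(q)$, which lies in $\face_{p, s}(r)$. By Proposition~\ref{proposition:face-projection} (with the roles of $x$ and $q$ swapped, using symmetry of $d_1$), for every $x \in \face_{p, s}(r)$ we have
\[
d_1(x, q) = d_1(q, y) + d_1(y, x) = d_1\big(\face_{p, s}(r), q\big) + d_1(x, y).
\]
Hence the integrand factors as $e^{-d_1(x, q)} = e^{-d_1(\face_{p, s}(r), q)}\, e^{-d_1(x, y)}$ on the support of $\omega_{\face_{p, s}(r)}$. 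Recall the convention that this measure lives on the face, so only points $x$ of the face matter.

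Pulling the constant factor out of the integral gives
\[
\int_{\face_{p, s}(r)} e^{-d_1(x, q)} \, d\omega_{\face_{p, s}(r)}(x) = e^{-d_1(\face_{p, s}(r), q)} \int_{\face_{p, s}(r)} e^{-d_1(x, y)} \, d\omega_{\face_{p, s}(r)}(x).
\]
Since $y \in \face_{p, s}(r)$, the defining property of the weight measure of $\face_{p, s}(r)$ makes the remaining integral equal to $1$, which finishes the argument.

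The only point needing care is that $\omega_{\face_{p, s}(r)}$ really is a weight measure in all cases covered by the statement, including $r = 0$, where the formula in the text was stated only for $r > 0$. For $r > 0$, the face is a $1$-product of closed intervals and points, and the displayed measure is the product of the interval weight measures from \cite{Willerton2014}. The weight property of such products in $\ell_1$ follows because $e^{-d_1}$ factors coordinatewise, so the integral becomes a product of one-dimensional integrals, each equal to $1$. For $r = 0$, the face is the singleton $\set{p}$, and the formula $\tfrac{1}{2^{\fdim_s}}\sum_{t \in \sub_s} \lambda^{\fdim_t}_{\face_{p,t}(0)}$ should be read (or checked) as $\delta_p$. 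Alternatively, one can simply take the weight measure $\delta_p$ of the singleton, for which the claim is the trivial identity $e^{-d_1(p, q)} = e^{-d_1(\set{p}, q)}$. I expect this bookkeeping about the degenerate case to be the only mild obstacle; the main computation is the one-line factorization above.
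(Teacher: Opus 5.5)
Your proof is correct and matches the paper's argument: you use Proposition~\ref{proposition:face-projection} to split $d_1(x,q)$ through $\proj_{p,s,r}(q)$, pull out the constant $e^{-d_1(\face_{p,s}(r),q)}$, and apply the weight-measure property at the point $\proj_{p,s,r}(q)$ of the face. Your remark on the degenerate case $r=0$, where the face formula collapses to $\delta_p$, is correct and makes explicit something the paper leaves implicit.
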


\begin{proof}
Using Proposition~\ref{proposition:face-projection} and the fact that $\omega_{\face_{p, s}(r)}$ is the weight measure of~$\face_{p, s}(r)$, we get
\begin{gather*}
\int_{\face_{p, s}(r)} e^{-d_1(x, q)} \, d\omega_{\face_{p, s}(r)}(x) \\
= \int_{\face_{p, s}(r)} e^{-(d_1(q, \face_{p, s}(r)) + d_1(x, \proj_{p, s, r}(q)))} \, d\omega_{\face_{p, s}(r)}(x) \\
= e^{-d_1(q, \face_{p, s}(r))} \cdot \int_{\face_{p, s}(r)} e^{-d_1(x, \proj_{p, s, r}(q))} \, d\omega_{\face_{p, s}(r)}(x) \\
= e^{-d_1(q, \face_{p, s}(r))} \cdot 1 = e^{-d_1(q, \face_{p, s}(r))}.
\end{gather*}
\end{proof}

The formula for the weight measure of a union of cubes that we give later in Theorem~\ref{theorem:weight-measure-of-union-of-cubes} requires that projections of cubes onto coordinate axes are the same or disjoint. This is true of $\cc_F(r)$ for any $F \in \fsub(\ell_1^N)$ and sufficiently small $r \in \RR_{\geq 0}$. Just how small is given by the following quantity.

\begin{definition}
For any $N \in \NN$, let the \df{coordinate half-gap} of $F \in \fsub(\ell_1^N)$ be defined as
\[\chg(F) :=
\begin{cases}
\infty & \text{if $\card{F} \leq 1$}, \\
\frac{1}{2} \min\big(\set{|p_k - q_k|}{p, q \in F \land k \in \intcc[\NN]{1}{N}} \setminus \set{0}\big) & \text{if $\card{F} \geq 2$}.
\end{cases}
\]
\end{definition}

Observe:
\begin{itemize}
\item
$\chg(F) > 0$ for every $F \in \fsub(\ell_1^N)$,
\item
for every $r \in \RR_{\geq 0}$ we have $r < \chg(F)$ if and only if for all $p, q \in F$ and $k \in \intcc[\NN]{1}{N}$, the images of $\cc_p(r)$ and $\cc_q(r)$ under the $k$-th coordinate projection are either the same or disjoint. In particular, this implies that cubes in~$\cc_F(r)$ are pairwise disjoint.
\end{itemize}

The following trivial lemma is nonetheless useful to be able to refer to.

\begin{lemma}\label{lemma:coordinate-half-gap}
The following holds for all $N \in \NN$, $F \in \ifsub(\ell_1^N)$, $p, q \in F$, $r \in \intco{0}{\chg(F)}$ and $k \in \intcc[\NN]{1}{N}$.
\begin{enumerate}
\item
If $p_k > q_k$, then $p_k - r > q_k + r$.
\item
If $p_k < q_k$, then $p_k + r < q_k - r$.
\end{enumerate}
\end{lemma}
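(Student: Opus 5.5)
The plan is to unwind the definition of $\chg(F)$ and compare the distance $|p_k - q_k|$ with $2r$. Since $\card{F} \geq 1$ and $F$ is finite, we first dispose of the case $\card{F} \leq 1$. In that case $p = q$, so $p_k = q_k$, and neither hypothesis $p_k > q_k$ nor $p_k < q_k$ can hold. Both implications are then vacuously true. (This is also why the interval $\intco{0}{\chg(F)} = \intco{0}{\infty}$ causes no trouble.)

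Now assume $\card{F} \geq 2$ and prove the first item. Suppose $p_k > q_k$. Then $|p_k - q_k| = p_k - q_k$ is a nonzero element of the set $\set{|p'_j - q'_j|}{p', q' \in F \land j \in \intcc[\NN]{1}{N}} \setminus \set{0}$. This set is finite and nonempty, so its minimum is well defined, and by definition of $\chg(F)$ we get $p_k - q_k \geq 2\chg(F)$. Because $r < \chg(F)$, this gives $p_k - q_k > 2r$, which rearranges to $p_k - r > q_k + r$.

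The second item follows from the first by swapping the roles of $p$ and $q$. If $p_k < q_k$, the first item applied to the pair $(q, p)$ gives $q_k - r > p_k + r$, which is exactly the claim.

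There is no real obstacle here. The only points needing care are the separate treatment of the case $\card{F} \leq 1$ and the use of the strict inequality $r < \chg(F)$, which is what yields the strict conclusion.
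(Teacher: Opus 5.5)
Your proof is correct and follows the same approach as the paper, which simply says the lemma is immediate from the definition of the coordinate half-gap. You spell that out: for $\card{F} \geq 2$ every nonzero $|p_k - q_k|$ is at least $2\chg(F) > 2r$, the case $\card{F} \leq 1$ is vacuous, and the second item follows from the first by symmetry.
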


\begin{proof}
Immediate from the definition of the coordinate half-gap.
\end{proof}

\section{Fragments}

In this section we consider a specific way to partition a finite set, as well as the set of centers of proper faces of cubes around a finite set. We will need this for the definition of a certain system of linear equations in Section~\ref{section:fragment-system}.

\begin{definition}\label{definition:fragments}
Given $N \in \NN$, $F \in \ifsub(\ell_1^N)$, $q \in \ell_1^N$ and $u \in \set{-1, 0, 1}^N$, we define the \df{$u$-fragment of~$F$ at~$q$} to be the set
\[\fr{q, u} := \set{p \in F}{\all{k \in \intcc[\NN]{1}{N}}{\sgn(p_k - q_k) = u_k}}.\]
\end{definition}

For visualization, imagine putting the origin of a coordinate system to the point~$q$, then add all the coordinate semi-axes, quadrants, and the higher dimensional equivalents. These cut the whole space~$\ell_1^N$ into pieces, and a fragment of~$F$ is the part of~$F$ contained in one such piece.

\begin{example}
Consider $F = \set{a, b, c, d} \subseteq \ell_1^2$, as given by the following picture. For each point in $F$ we can cut~$\ell_1^N$ into the following: the point itself, 4 semi-axes and 4 (open) quadrants.
\begin{center}
\begin{tikzpicture}[scale = 0.9]
\draw[black!10, very thick] (-1, 0) -- (4, 0);
\draw[black!10, very thick] (-1, 3) -- (4, 3);
\draw[black!10, very thick] (-1, 2) -- (4, 2);
\draw[black!10, very thick] (0, -1) -- (0, 4);
\draw[black!10, very thick] (1, -1) -- (1, 4);
\draw[black!10, very thick] (3, -1) -- (3, 4);
\filldraw (0, 0) circle (2pt) node [below left] {$a$};
\filldraw (1, 3) circle (2pt) node [below left] {$b$};
\filldraw (3, 2) circle (2pt) node [below left] {$c$};
\filldraw (3, 0) circle (2pt) node [below left] {$d$};
\end{tikzpicture}
\end{center}
Out of $3^N \cdot \card{F} = 3^2 \cdot 4 = 36$ total fragments of~$F$ at a point in~$F$, $22$ are empty. The remaining $14$~fragments are the following.
\begin{align*}
&\fr{a, (0, 0)} = \set{a} & &\fr{b, (0, 0)} = \set{b} & &\fr{c, (0, 0)} = \set{c} & &\fr{d, (0, 0)} = \set{d} \\
&\fr{a, (1, 1)} = \set{b, c} & &\fr{b, (-1, -1)} = \set{a} & &\fr{c, (-1, -1)} = \set{a} & &\fr{d, (-1, 0)} = \set{a} \\
&\fr{a, (1, 0)} = \set{d} & &\fr{b, (1, -1)} = \set{c, d} & &\fr{c, (-1, 1)} = \set{b} & &\fr{d, (-1, 1)} = \set{b} \\
&&&& &\fr{c, (0, -1)} = \set{d} & &\fr{d, (0, 1)} = \set{c}
\end{align*}
We get infinitely many further fragments when we take $q$ outside of~$F$. For example, if $q$ is in the interior of the square just left of the points $c$ and~$d$, we have $\fr{q, (-1, -1)} = \set{a}$, $\fr{q, (-1, 1)} = \set{b}$, $\fr{q, (1, 1)} = \set{c}$, $\fr{q, (1, -1)} = \set{d}$, whereas the $(0, 0)$-, $(0, -1)$-, $(0, 1)$-, $(-1, 0)$-, and $(1, 0)$-fragments of~$F$ at~$q$ are empty.
\end{example}

\begin{lemma}\label{lemma:fragments}
Let $N \in \NN$ and $F \in \ifsub(\ell_1^N)$.
\begin{enumerate}
\item\label{lemma:fragments:partition}
For any $q \in F$, the fragments $\set{\fr{q, u}}{u \in \dr}$ form a partition of $F \setminus \set{q}$. That is, $F \setminus \set{q} = \bigcup_{u \in \dr} \fr{q, u}$, and this is a disjoint union.
\item\label{lemma:fragments:projection-formula}
For all $q \in F$, $u \in \dr$, $p \in \fr{q, u}$, $t \in \intcc{-1}{1}^N$ and $r \in \intco{0}{\chg(F)}$, we have
\[d_1\big(\cc_p(r), q + r u\big) + d_1\big(\face_{q, u}(r), q + r t\big) = d_1\big(\cc_p(r), q + r t\big).\]
\end{enumerate}
\end{lemma}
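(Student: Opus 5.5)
Part 1 is immediate from the definitions. For $p \in F \setminus \set{q}$, let $u := (\sgn(p_k - q_k))_{k}$. Then $p \in \fr{q, u}$, and $u \neq 0_N$ because $p \neq q$; hence $u \in \dr$. Uniqueness holds because the vector $(\sgn(p_k - q_k))_k$ is determined by~$p$, so the fragments are pairwise disjoint. Finally, $q$ itself lies only in $\fr{q, 0_N}$, which is not among the fragments indexed by~$\dr$.

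For part 2 the plan is to reduce everything to one coordinate, as in the proof of Proposition~\ref{proposition:face-projection}. The key step is to compute each of the three distances explicitly as a sum over $k \in \intcc[\NN]{1}{N}$. For a box $\prod_k [a_k, b_k]$ and a point $x$, the $\ell_1$-distance is $\sum_k d(x_k, [a_k, b_k])$, because the minimization separates coordinatewise. So it suffices to prove, for each fixed~$k$,
\[
d\big([p_k - r, p_k + r], q_k + r u_k\big) + d\big([q_k + rL_{u_k}, q_k + rR_{u_k}], q_k + r t_k\big) = d\big([p_k - r, p_k + r], q_k + r t_k\big).
\]

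We split into cases according to $u_k = \sgn(p_k - q_k)$.

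\begin{itemize}
\item If $u_k = 0$, then $p_k = q_k$, so $q_k + r t_k \in [p_k - r, p_k + r]$ since $|t_k| \leq 1$. Likewise $q_k + r u_k = q_k$ lies in that interval. All three terms vanish: the middle one because the face interval is $[q_k - r, q_k + r]$, which contains $q_k + r t_k$.
\item If $u_k = 1$, then $p_k > q_k$, and Lemma~\ref{lemma:coordinate-half-gap} gives $p_k - r > q_k + r$. Thus both $q_k + r$ and $q_k + r t_k \leq q_k + r$ lie to the left of the cube's interval. The first term is $(p_k - r) - (q_k + r)$. The face interval is the single point $q_k + r$, so the middle term is $r(1 - t_k)$. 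The right-hand side is $(p_k - r) - (q_k + r t_k)$, and the two sides agree.
\item The case $u_k = -1$ is symmetric.
\end{itemize}

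Summing over~$k$ gives the identity. The only step needing care is the use of $r < \chg(F)$ to guarantee strict separation in the nonzero-sign coordinates, which Lemma~\ref{lemma:coordinate-half-gap} provides.
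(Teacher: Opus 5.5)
Your proposal is correct and follows essentially the same route as the paper. Both arguments split coordinatewise on $u_k = \sgn(p_k - q_k)$ and use Lemma~\ref{lemma:coordinate-half-gap} to get the strict separation in the nonzero-sign coordinates; the paper merely first derives the closed form $d_1\big(\cc_p(r), q + r t\big) = \spr{u}{p - q} - r\big(\|u\|_1 + \spr{u}{t}\big)$, specializes it to $t = u$, and takes the middle term from Proposition~\ref{proposition:face-projection-from-cube-itself}, whereas you check the identity coordinate by coordinate directly.
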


\begin{proof}
\
\begin{enumerate}
\item
Clear from the definition.
\item
It follows from Lemma~\ref{lemma:coordinate-half-gap} that, for each $k \in \intcc[\NN]{1}{N}$, exactly one of the following happens: $p_k = q_k$ (when $u_k = 0$) or $p_k - r > q_k + r$ (when $u_k = 1$) or $p_k + r < q_k - r$ (when $u_k = -1$). From this and Proposition~\ref{proposition:face-projection} we get
\begin{gather*}
d_1\big(\cc_p(r), q + r t\big) = d_1\big(\face_{p, 0_N}(r), q + r t\big) = d_1\big(\proj_{p, 0_N, r}(q + r t), q + r t\big) \\
= \sum_{k \in \intcc[\NN]{1}{N}} \Big|\big(\proj_{p, 0_N, r}(q + r t)\big)_k - (q_k + r t_k)\Big| = \sum_{k \in \intcc[\NN]{1}{N}} \begin{cases} 0 & \text{if $u_k = 0$}, \\ p_k - q_k - r (1 + t_k) & \text{if $u_k = 1$}, \\ q_k - p_k - r (1 - t_k) & \text{if $u_k = -1$} \end{cases} \\
= \sum_{k \in \intcc[\NN]{1}{N}} \Big(u_k (p_k - q_k) - r \big(|u_k| + u_k t_k)\big)\Big) = \spr{u}{p - q} - r \big(\|u\|_1 + \spr{u}{t}\big).
\end{gather*}
As this holds for all $t \in \intcc{-1}{1}^N$, we may apply it to $t = u$, which gives us
\[d_1\big(\cc_p(r), q + r u\big) = \spr{u}{p - q} - r \big(\|u\|_1 + \spr{u}{u}\big) = \spr{u}{p - q} - 2 r \|u\|_1.\]
By Proposition~\ref{proposition:face-projection-from-cube-itself} we also have $d_1\big(\face_{q, u}(r), q + r t\big) = r \big(\|u\|_1 - \spr{u}{t}\big)$. Putting all this together gives us
\begin{gather*}
d_1\big(\cc_p(r), q + r u\big) + d_1\big(\face_{q, u}(r), q + r t\big) = \spr{u}{p - q} - 2 r \|u\|_1 + r \big(\|u\|_1 - \spr{u}{t}\big) \\
= \spr{u}{p - q} - r \big(\|u\|_1 + \spr{u}{t}\big) = d_1\big(\cc_p(r), q + r t\big).
\end{gather*}
\end{enumerate}
\end{proof}

Next, we consider how to partition the collection of centers of proper faces of a finite union of cubes --- or, to be more precise, the set by which we index them.

\begin{definition}\label{definition:dotted-fragments}
Given $N \in \NN$, $F \in \ifsub(\ell_1^N)$, $q \in \ell_1^N$ and $u \in \set{-1, 0, 1}^N$, we define the \df{dotted $u$-fragment of~$F$ at~$q$} to be the set
\[\dfr{q, u} := \set[2]{(p, s) \in F \times \dr}{p + \tfrac{\chg(F)}{2} s \in \fr[\dotted{F}{\tfrac{\chg(F)}{2}}]{q, u}}.\]
\end{definition}

Note that a dotted fragment could equivalently be defined as
\[\dfr{q, u} = \set[1]{(p, s) \in F \times \dr}{p + r s \in \fr[\dotted{F}{r}]{q, u}}\]
for any $r \in \intoo{0}{\chg(F)}$, and in this form it is arguably easier to imagine what it represents. However, we purposefully define it in a way which makes it clear that it is independent of~$r$. This is particularly relevant because we will also need to consider the case $r = 0$, when the two definitions are \emph{not} equivalent.

\begin{example}
The two pictures below illustrate two dotted fragments at the point~$q$, marked by~$\circ$, the left one for $u = (1, 1)$, and the right one for $u = (1, 0)$. The solid points are $p + r s$ for all $(p, s) \in F \times \dr$, and in particular the black (not gray) solid points are $p + r s$ just for $(p, s) \in \dfr{q, u}$.
\begin{center}
\begin{tikzpicture}[scale = 0.7]
\def\pointsize{2.5pt}
\draw[black!50] (-2, -2) rectangle (9, 6);

\draw[-{Stealth[length=7pt]}] (0, 0) -- node [above=0.1pt, left=0.1pt] {$u$} (1, 1);
\draw (0, 0) circle (\pointsize);

\filldraw[black!20] (-1, -1) circle (\pointsize);
\filldraw[black!20] (0, -1) circle (\pointsize);
\filldraw[black!20] (1, -1) circle (\pointsize);
\filldraw[black!20] (-1, 0) circle (\pointsize);
\filldraw[black!20] (1, 0) circle (\pointsize);
\filldraw[black!20] (-1, 1) circle (\pointsize);
\filldraw[black!20] (0, 1) circle (\pointsize);
\filldraw (1, 1) circle (\pointsize);

\filldraw (3, 3) circle (\pointsize);
\filldraw (4, 3) circle (\pointsize);
\filldraw (5, 3) circle (\pointsize);
\filldraw (3, 4) circle (\pointsize);
\filldraw (5, 4) circle (\pointsize);
\filldraw (3, 5) circle (\pointsize);
\filldraw (4, 5) circle (\pointsize);
\filldraw (5, 5) circle (\pointsize);

\filldraw[black!20] (6, -1) circle (\pointsize);
\filldraw[black!20] (7, -1) circle (\pointsize);
\filldraw[black!20] (8, -1) circle (\pointsize);
\filldraw[black!20] (6, 0) circle (\pointsize);
\filldraw[black!20] (8, 0) circle (\pointsize);
\filldraw (6, 1) circle (\pointsize);
\filldraw (7, 1) circle (\pointsize);
\filldraw (8, 1) circle (\pointsize);
\end{tikzpicture}
\quad
\begin{tikzpicture}[scale = 0.7]
\def\pointsize{2.5pt}
\draw[black!50] (-2, -2) rectangle (9, 6);

\draw[-{Stealth[length=7pt]}] (0, 0) -- node [above=0.1pt] {$u$} (1, 0);
\draw (0, 0) circle (\pointsize);

\filldraw[black!20] (-1, -1) circle (\pointsize);
\filldraw[black!20] (0, -1) circle (\pointsize);
\filldraw[black!20] (1, -1) circle (\pointsize);
\filldraw[black!20] (-1, 0) circle (\pointsize);
\filldraw (1, 0) circle (\pointsize);
\filldraw[black!20] (-1, 1) circle (\pointsize);
\filldraw[black!20] (0, 1) circle (\pointsize);
\filldraw[black!20] (1, 1) circle (\pointsize);

\filldraw[black!20] (3, 3) circle (\pointsize);
\filldraw[black!20] (4, 3) circle (\pointsize);
\filldraw[black!20] (5, 3) circle (\pointsize);
\filldraw[black!20] (3, 4) circle (\pointsize);
\filldraw[black!20] (5, 4) circle (\pointsize);
\filldraw[black!20] (3, 5) circle (\pointsize);
\filldraw[black!20] (4, 5) circle (\pointsize);
\filldraw[black!20] (5, 5) circle (\pointsize);

\filldraw[black!20] (6, -1) circle (\pointsize);
\filldraw[black!20] (7, -1) circle (\pointsize);
\filldraw[black!20] (8, -1) circle (\pointsize);
\filldraw (6, 0) circle (\pointsize);
\filldraw (8, 0) circle (\pointsize);
\filldraw[black!20] (6, 1) circle (\pointsize);
\filldraw[black!20] (7, 1) circle (\pointsize);
\filldraw[black!20] (8, 1) circle (\pointsize);
\end{tikzpicture}
\end{center}
\end{example}

\begin{lemma}\label{lemma:dotted-fragment-cases}
The following holds for all $N \in \NN$, $F \in \ifsub(\ell_1^N)$, $q \in F$, $u \in \dr$, $(p, s) \in \dfr{q, u}$, and $k \in \intcc[\NN]{1}{N}$.
\begin{enumerate}
\item
If $p_k = q_k$, then $u_k = s_k$.
\item
If $p_k > q_k$, then $u_k = 1$.
\item
If $p_k < q_k$, then $u_k = -1$.
\end{enumerate}
\end{lemma}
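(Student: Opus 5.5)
Unwinding the definition will do it. Fix $r := \tfrac{\chg(F)}{2}$, so that $0 < r < \chg(F)$ (note $\chg(F) > 0$, and if $\card{F} \le 1$ then $\chg(F) = \infty$; in that degenerate case we may take any finite positive $r$ via the equivalent $r$-independent description of dotted fragments noted after Definition~\ref{definition:dotted-fragments}). By Definition~\ref{definition:dotted-fragments}, $(p, s) \in \dfr{q, u}$ means $p + r s \in \fr[\dotted{F}{r}]{q, u}$. By Definition~\ref{definition:fragments} this says that for every coordinate~$k$
\[\sgn\big(p_k + r s_k - q_k\big) = u_k.\]
So each claimed case reduces to determining the sign of $p_k - q_k + r s_k$ with $s_k \in \set{-1, 0, 1}$.

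The three cases are handled coordinatewise.

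\emph{Case $p_k = q_k$.} The quantity is $r s_k$. Since $r > 0$, its sign is $s_k$, hence $u_k = s_k$.

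\emph{Case $p_k > q_k$.} Here $p_k, q_k$ are coordinates of points of~$F$ that differ, so $p_k - q_k \ge 2\chg(F) > r$ by the definition of the coordinate half-gap (equivalently, Lemma~\ref{lemma:coordinate-half-gap} gives $p_k - r > q_k + r$). Since $|r s_k| \le r$, we get $p_k - q_k + r s_k > 0$, so $u_k = 1$.

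\emph{Case $p_k < q_k$.} This is symmetric, giving $p_k - q_k + r s_k < 0$ and hence $u_k = -1$.

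The only point needing care is the degenerate case $\card{F} \le 1$, where $\chg(F) = \infty$ and the literal expression $\tfrac{\chg(F)}{2}$ is not a real number. There, however, $p = q$ is forced since $q \in F$, so only the first case can arise. For it we just need some positive radius, which is supplied by the equivalent description of dotted fragments with any $r \in \intoo{0}{\chg(F)}$. With that handled, everything else is a one-line sign computation, and I expect no real obstacle.
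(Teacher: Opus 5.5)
Your proof is correct and is essentially the paper's argument: you unwind the definition to $\sgn\big(p_k + \tfrac{\chg(F)}{2} s_k - q_k\big) = u_k$ and read off the sign coordinatewise, using Lemma~\ref{lemma:coordinate-half-gap} when $p_k \neq q_k$. Your handling of the degenerate case $\card{F} = 1$, where $\chg(F) = \infty$, is a small extra point of care that the paper does not address.
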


\begin{proof}
Set $g := \tfrac{\chg(F)}{2}$. Note that for $(p, s) \in \dfr{q, u}$ we have $\sgn(p_k + g s_k - q_k) = u_k$.
\begin{enumerate}
\item
If $p_k = q_k$, then $u_k = \sgn(p_k + g s_k - q_k) = \sgn(g s_k) = \sgn(s_k) = s_k$.
\item
If $p_k > q_k$, then $p_k - g > q_k + g$ by Lemma~\ref{lemma:coordinate-half-gap}. Hence $p_k + g s_k - q_k > 0$, so $u_k = \sgn(p_k + g s_k - q_k) = 1$.
\item
Analogous to the previous item.
\end{enumerate}
\end{proof}

\begin{lemma}\label{lemma:dotted-fragments}
Let $N \in \NN$ and $F \in \ifsub(\ell_1^N)$.
\begin{enumerate}
\item\label{lemma:dotted-fragments:partition}
For any $q \in F$, the dotted fragments $\set{\dfr{q, u}}{u \in \dr}$ form a partition of $F \times \dr$. That is, $F \times \dr = \bigcup_{u \in \dr} \dfr{q, u}$, and this is a disjoint union.
\item\label{lemma:dotted-fragments:projection-formula}
For all $q \in F$, $u \in \dr$, $(p, s) \in \dfr{q, u}$, $t \in \intcc{-1}{1}^N$ and $r \in \intco{0}{\chg(F)}$, we have
\[d_1\big(\face_{p, s}(r), q + r u\big) + d_1\big(\face_{q, u}(r), q + r t\big) = d_1\big(\face_{p, s}(r), q + r t\big).\]
\end{enumerate}
\end{lemma}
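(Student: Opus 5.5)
For the partition claim (\ref{lemma:dotted-fragments:partition}) I would argue directly from Definition~\ref{definition:dotted-fragments}. Put $g := \tfrac{\chg(F)}{2}$. For every $(p,s) \in F \times \dr$ the point $p + g s$ lies in $\dotted{F}{g}$. The vector $u$ with $u_k := \sgn(p_k + g s_k - q_k)$ is the unique one for which $p + g s$ belongs to the $u$-fragment of $\dotted{F}{g}$ at~$q$. Hence the dotted fragments, as $u$ ranges over $\set{-1,0,1}^N$, are pairwise disjoint and cover $F \times \dr$. It remains to see that $u = 0_N$ never occurs, which amounts to showing $p + g s \neq q$. Suppose $p \neq q$. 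Then some coordinate has $p_k \neq q_k$, and Lemma~\ref{lemma:coordinate-half-gap} (applicable since $g < \chg(F)$) gives $p_k + g s_k \neq q_k$. Suppose instead $p = q$. Then $s \neq 0_N$ because $s \in \dr$, so again $p + g s \neq q$. Therefore the index set reduces to $\dr$.

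For the distance identity (\ref{lemma:dotted-fragments:projection-formula}) I would follow the proof of Lemma~\ref{lemma:fragments}(\ref{lemma:fragments:projection-formula}) and compute coordinatewise. Proposition~\ref{proposition:face-projection} gives $d_1(\face_{p,s}(r), x) = \sum_k \operatorname{dist}(x_k, I_k)$, where $I_k = \intcc{p_k + rL_{s_k}}{p_k + rR_{s_k}}$. Take $x = q + r t$ with $t \in \intcc{-1}{1}^N$. Lemma~\ref{lemma:dotted-fragment-cases} then splits each coordinate into three cases.

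In the first case $p_k = q_k$, so $s_k = u_k$. If $u_k = 0$, the interval $I_k = \intcc{q_k - r}{q_k + r}$ contains $x_k$ and the contribution is $0$. If $u_k = \pm 1$, the interval $I_k$ is the single point $q_k + r u_k$, and the contribution is $r(1 - u_k t_k)$.

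In the second case $p_k > q_k$ and $u_k = 1$. By Lemma~\ref{lemma:coordinate-half-gap}, $p_k - r > q_k + r \ge x_k$, so the distance is $p_k + r L_{s_k} - q_k - r t_k$. The third case, $p_k < q_k$, is symmetric.

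In every case the contribution has the form $c_k - r u_k t_k$, where $c_k$ does not depend on $t$. Summing over $k$ gives $d_1(\face_{p,s}(r), q + r t) = C - r\spr{u}{t}$ for a constant $C$ independent of~$t$. Setting $t = u$ yields $d_1(\face_{p,s}(r), q + r u) = C - r\|u\|_1$, since $\spr{u}{u} = \|u\|_1$. Proposition~\ref{proposition:face-projection-from-cube-itself} gives $d_1(\face_{q,u}(r), q + r t) = r(\|u\|_1 - \spr{u}{t})$. Adding these two expressions gives exactly $C - r\spr{u}{t}$, which is the claimed identity.

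The main obstacle is keeping the case analysis honest, particularly at $r = 0$ and in the coordinates with $p_k = q_k$ and $u_k = 0$. There the contribution must be a constant that does not depend on $t$, and this holds only because $x_k$ stays inside $I_k$ for all $t_k \in \intcc{-1}{1}$. Linearity in $t$ also requires that no absolute value changes sign as $t$ varies, and that is precisely what Lemma~\ref{lemma:coordinate-half-gap} guarantees under the hypothesis $r < \chg(F)$.
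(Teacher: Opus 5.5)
Your proof is correct and follows essentially the same route as the paper. Part~1 rests on the coordinate half-gap ruling out $p + \tfrac{\chg(F)}{2}s = q$. Part~2 is the same coordinatewise three-case computation via Proposition~\ref{proposition:face-projection}, Lemma~\ref{lemma:coordinate-half-gap} and Lemma~\ref{lemma:dotted-fragment-cases}, evaluated at $t = u$ and combined with Proposition~\ref{proposition:face-projection-from-cube-itself}. Writing each coordinate's contribution as $c_k - r u_k t_k$ is only tidier bookkeeping of the paper's explicit recombination, in the style of the proof of Lemma~\ref{lemma:fragments}.
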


\begin{proof}
\
\begin{enumerate}
\item
Follows from the definition of the dotted fragment and because the definition of the coordinate half-gap implies $p + \tfrac{\chg(F)}{2} s \neq q$.
\item
Using Proposition~\ref{proposition:face-projection} and Lemma~\ref{lemma:coordinate-half-gap}, we may calculate
\begin{gather*}
d_1\big(\face_{p, s}(r), q + r t\big) = d_1\big(\proj_{p, s, r}(q + r t), q + r t\big) \\
= \sum_{k \in \intcc[\NN]{1}{N}} \Big|\big(\proj_{p, s, r}(q + r t)\big)_k - (q_k + r t_k)\Big| = \sum_{k \in \intcc[\NN]{1}{N}} \begin{cases} r (|s_k| - s_k t_k) & \text{if $p_k = q_k$}, \\ (p_k + r L_{s_k}) - (q_k + r t_k) & \text{if $p_k > q_k$}, \\ (q_k + r t_k) - (p_k + r R_{s_k}) & \text{if $p_k < q_k$}. \end{cases} \\
\end{gather*}
As this holds for arbitrary $t \in \intcc{-1}{1}^N$, we may take $t = u$ to get (while also using Lemma~\ref{lemma:dotted-fragment-cases})
\begin{gather*}
d_1\big(\face_{p, s}(r), q + r u\big) = \sum_{k \in \intcc[\NN]{1}{N}} \begin{cases} r (|s_k| - s_k u_k) & \text{if $p_k = q_k$}, \\ (p_k + r L_{s_k}) - (q_k + r u_k) & \text{if $p_k > q_k$}, \\ (q_k + r u_k) - (p_k + r R_{s_k}) & \text{if $p_k < q_k$} \end{cases} \\
= \sum_{k \in \intcc[\NN]{1}{N}} \begin{cases} 0 & \text{if $p_k = q_k$}, \\ (p_k + r L_{s_k}) - (q_k + r) & \text{if $p_k > q_k$}, \\ (q_k - r) - (p_k + r R_{s_k}) & \text{if $p_k < q_k$}. \end{cases}
\end{gather*}
Proposition~\ref{proposition:face-projection-from-cube-itself} gives us $d_1\big(\face_{q, u}(r), q + r t\big) = \sum_{k \in \intcc[\NN]{1}{N}} r \big(|u_k| - u_k t_k\big)$. Putting all this together and applying Lemma~\ref{lemma:dotted-fragment-cases}, we conclude
\begin{gather*}
d_1\big(\face_{p, s}(r), q + r u\big) + d_1\big(\face_{q, u}(r), q + r t\big) \\
= \left(\sum_{k \in \intcc[\NN]{1}{N}} \begin{cases} 0 & \text{if $p_k = q_k$}, \\ (p_k + r L_{s_k}) - (q_k + r) & \text{if $p_k > q_k$}, \\ (q_k - r) - (p_k + r R_{s_k}) & \text{if $p_k < q_k$} \end{cases}\right) + \sum_{k \in \intcc[\NN]{1}{N}} r \big(|u_k| - u_k t_k\big) \\
= \sum_{k \in \intcc[\NN]{1}{N}} \begin{cases} r (|s_k| - s_k t_k) & \text{if $p_k = q_k$}, \\ (p_k + r L_{s_k}) - (q_k + r) + r (1 - t_k) & \text{if $p_k > q_k$}, \\ (q_k - r) - (p_k + r R_{s_k}) + r (1 + t_k) & \text{if $p_k < q_k$} \end{cases} \\
= \sum_{k \in \intcc[\NN]{1}{N}} \begin{cases} r (|s_k| - s_k t_k) & \text{if $p_k = q_k$}, \\ (p_k + r L_{s_k}) - (q_k + r t_k) & \text{if $p_k > q_k$}, \\ (q_k + r t_k) - (p_k + r R_{s_k}) & \text{if $p_k < q_k$} \end{cases} \\
= d_1\big(\face_{p, s}(r), q + r t\big).
\end{gather*}
\end{enumerate}
\end{proof}

\section{Fragment System}\label{section:fragment-system}

As Theorem~\ref{theorem:weight-measure-of-union-of-cubes} below demonstrates, the weight measure of a (sufficiently nice) union of cubes can be given as a linear combination of the weight measures of the faces of the cubes, and the coefficients in this linear combinations are given in terms of solutions of a certain system of linear equations, dubbed `Fragment System'. This section is dedicated to the study of this system.

The strategy here is a more general version of the strategy in~\cite{kalisnik2026continuitymagnitudeskewfinite} where we likewise gave a formula for the weight measure of certain unions of cubes. Specifically, the following definition of the Fragment System was inspired by and generalizes the `Corner System'~\cite[Definition~5.8]{kalisnik2026continuitymagnitudeskewfinite}.

\begin{definition}\label{definition:fragment-system}
For any $N \in \NN$, $F \in \ifsub(\ell_1^N)$ and $r \in \intco{0}{\chg(F)}$, we define the following system of linear equations for unknowns~$(x_{p, s})_{(p, s) \in F \times \dr}$.
\begin{samepage}
\begin{center}
\underline{Fragment~System}:
\end{center}
\[\Bigg(\sum_{(p, s) \in \dfr{q, u}}\!\!\!\!x_{p, s}\,e^{-d_1(\face_{p, s}(r), q + r u)} \ \ = \ \sum_{p \in \fr{q, u}}\!\!e^{-d_1(\cc_p(r), q + r u)}\Bigg)_{(q, u) \in F \times \dr}\]
\end{samepage}

Additionally, we define $FS(r)$ to be the coefficient matrix of the Fragment System. This system has $\card{F} \cdot (3^N - 1)$ equations and the same number of unknowns.
\end{definition}

\begin{lemma}\label{lemma:fragment-system-solvable-at-zero}
Let $N \in \NN$ and $F \in \ifsub(\ell_1^N)$. Then $FS(0)$ (the coefficient matrix of the Fragment System for $r = 0$) is invertible.
\end{lemma}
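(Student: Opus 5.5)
The plan is to compute $FS(0)$ explicitly and then prove it has trivial kernel by a double induction, on the dimension $N$ and on the number of distinct values of one coordinate.

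\emph{Step 1: explicit form of $FS(0)$.} At $r=0$ every face $\face_{p,s}(0)$ is $\set{p}$ and $q+0\cdot u=q$, so
\[FS(0)_{(q,u),(p,s)} = \big[(p,s)\in\dfr{q,u}\big]\, e^{-d_1(p,q)}.\]
By Lemma~\ref{lemma:dotted-fragment-cases}, $(p,s)\in\dfr{q,u}$ holds exactly when $u=\varphi(p,s,q)$. Here $\varphi_k := \sgn(p_k-q_k)$ if $p_k\neq q_k$, and $\varphi_k := s_k$ otherwise. The converse direction also holds, by Lemma~\ref{lemma:coordinate-half-gap} and the definition of $\dfr{}$.

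Both factors split over coordinates:
\[e^{-d_1(p,q)}=\prod_k e^{-|p_k-q_k|},\qquad [u=\varphi]=\prod_k[u_k=\varphi_k].\]
Two consequences follow. First, the diagonal entries $(q,u),(q,u)$ equal $1$, since $(q,s)\in\dfr{q,u}$ iff $s=u$. Second, by Lemma~\ref{lemma:dotted-fragments}\ref{lemma:dotted-fragments:partition}, each column $(p,s)$ has exactly one nonzero entry in the block of rows indexed by a fixed $q$.

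\emph{Step 2: the base case $N=1$.} Order $F$ as $a_1<\dots<a_n$. Row $(a_i,1)$ involves $(a_i,1)$ and all $(a_j,\pm1)$ with $j>i$. Row $(a_i,-1)$ involves $(a_i,-1)$ and all $(a_j,\pm1)$ with $j<i$.

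I would take a vector $x$ in the kernel and subtract $e^{-(a_{i+1}-a_i)}$ times row $(a_{i+1},1)$ from row $(a_i,1)$. This uses the multiplicativity $e^{-(a_j-a_i)}=e^{-(a_{i+1}-a_i)}e^{-(a_j-a_{i+1})}$. The rows become bidiagonal in a suitable ordering, and the resulting $2\times2$ blocks have determinant of the form $1-e^{-2\Delta}\neq0$. This is the same mechanism that makes the 1D similarity matrix (a path) invertible. It should show $x=0$ explicitly.

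\emph{Step 3: induction on $N$.} Slice $F$ by the last coordinate into $F^{(c)}=\set{p\in F}{p_N=c}$. For rows and columns whose points lie in the same slice, $\varphi_N=s_N$ and the $N$-th factor of the distance is $1$. So the diagonal blocks are essentially $(N-1)$-dimensional fragment matrices of the slice, tensored with the $N$-th direction component. They are handled by the outer induction hypothesis, and for $s_N=u_N=0$ via $\Dir_{N-1}$ together with the direction-$0_{N-1}$ part.

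The off-diagonal blocks between slices with $c<c'$ carry a factor $e^{-(c'-c)}$ and force $u_N=\pm1$. Eliminating slices one at a time in increasing order of $c$, exactly as in Step 2, is an inner induction on the number of slices. The multiplicativity $e^{-|c''-c|}=e^{-|c'-c|}e^{-|c''-c'|}$ for $c<c'<c''$ should make the Schur complements collapse to matrices of the same shape, with diagonal blocks modified by factors $1-e^{-2(c'-c)}$.

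\emph{Main obstacle.} The hard step is Step 3. The index set $F\times\dr$ is not a product, because $F$ is not a grid and the zero direction is excluded, so the tensor factorization only holds blockwise. I must verify that after eliminating one slice, the remaining system is again (a nonzero multiple of) a fragment matrix to which the induction hypothesis applies. This likely requires strengthening the hypothesis to a slightly more general family of matrices, for instance allowing the row block for direction $0$ in the last coordinate and allowing weighted diagonals, and proving invertibility of that whole family. I would first try to formulate this strengthened statement by working out the case $N=2$ with two slices by hand.
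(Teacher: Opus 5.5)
\textbf{Gap: Step~3 is not a proof, and the elimination you propose for it does not work.} Your Steps~1 and~2 are correct. The explicit form of $FS(0)$ is right. The $N=1$ elimination does produce $2\times 2$ blocks with determinant $1-e^{-2\Delta}$, together with the boundary equations $x_{a_n,1}=0$ and $x_{a_1,-1}=0$. But Step~3 is the whole content of the lemma for $N\ge 2$, and the mechanism ``exactly as in Step~2'' fails there.

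In Step~2, the row $(a_{i+1},1)$, multiplied by $e^{-(a_{i+1}-a_i)}$, reproduces every contribution to row $(a_i,1)$ coming from the points $a_j$ with $j>i+1$. In higher dimension, the contributions of higher slices to a row $(q,u)$ depend on the first $N-1$ coordinates of $q$, through both $e^{-d_1}$ and the condition $u=\varphi(p,s,q)$. The next slice need not contain a point with those coordinates.

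Concretely, take $F=\{(0,0),(5,1),(0,2)\}\subseteq\ell_1^2$. The row $((0,0),(0,1))$ reads $x_{(0,0),(0,1)}+e^{-2}\big(x_{(0,2),(0,1)}+x_{(0,2),(0,-1)}\big)=0$. The only row of the middle slice involving the point $(0,2)$ is $((5,1),(-1,1))$, and it contains all eight unknowns $x_{(0,2),s}$ with the common coefficient $e^{-6}$. So no combination of middle-slice rows can perform the cancellation. You would need a genuine Schur complement involving the inverse of the slice's extended $(N-1)$-dimensional matrix. You have neither identified a family of matrices closed under that operation nor shown its members are invertible, so as written the proposal does not prove the lemma.

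\textbf{Missing idea: aggregate the unknowns over subfaces instead of eliminating slices.} Set $y_{p,u}:=\sum_{s\in\sub_u\cap\dr}x_{p,s}$.
\begin{itemize}
\item Summing the homogeneous equations at a fixed $q$ over all $u\in\dr$ gives, by the partition property, $\sum_{p}y_{p,0_N}e^{-d_1(p,q)}=0$ for every $q$. Hence $Z_F\,(y_{p,0_N})_p=0$ and so $y_{\cdot,0_N}=0$.
\item For $u\in\dr$, sum the equations indexed by $(q,t)$ over $t\in\sub_u$. This yields $\sum_p y_{p,a(p)}e^{-d_1(p,q)}=0$ for suitable $a(p)$ with $\fdim_{a(p)}\ge\fdim_u$. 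Equality holds exactly when $p$ agrees with $q$ in every coordinate where $u_k\neq 0$, and then $a(p)=u$.
\item By downward induction on $\fdim_u$, only $\sum_{p\in\widehat{F}_{q,u}}y_{p,u}e^{-d_1(p,q)}=0$ survives. Here $\widehat{F}_{q,u}$ is the class of $q$ under ``agree on the coordinates where $u_k\neq0$'', so the same equation holds for every $q'\in\widehat{F}_{q,u}$. Positive definiteness of the similarity matrix of $\widehat{F}_{q,u}\subseteq F$ then forces $y_{\cdot,u}=0$.
\item An upward induction on $\fdim_s$, inverting $y_{q,u}=\sum_{s\in\sub_u}x_{q,s}$, gives $x=0$.
\end{itemize}
This is the paper's argument. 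It needs no induction on $N$ and no Schur complements. Your base case collapses to it immediately: adding rows $(a_i,1)$ and $(a_i,-1)$ gives $Z_F z=0$ for $z_i=x_{a_i,1}+x_{a_i,-1}$.
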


\begin{proof}
Since $FS(0)$ is a square matrix, it suffices to show that its kernel is trivial. Let us assume that $x = (x_{p, s})_{(p, s) \in F \times \dr}$ is a solution of $FS(0)\,x = 0_{F \times \dr}$. From this we will derive that $x = 0_{F \times \dr}$.

For every $p \in F$ and $u \in \set{-1, 0, 1}^N$ denote
\[y_{p, u} \ := \sum_{s \in \sub_u} x_{p, s}.\]
We claim that all $y_{p, u}$ are~$0$. We will prove this by induction on~$\fdim_u$, starting at the top dimension and going down.

By assumption we have
\[\sum_{(p, s) \in \dfr{q, u}}\!\!\!\!x_{p, s}\,e^{-d_1(p, q)} = 0\]
for every $q \in F$ and $u \in \dr$. Sum these equations for a fixed~$q$ over all~$u$. We get
\[0 =\!\!\sum_{u \in \dr} \sum_{(p, s) \in \dfr{q, u}}\!\!\!\!x_{p, s}\,e^{-d_1(p, q)} = \sum_{\substack{p \in F \\ s \in \dr}}\!\!x_{p, s}\,e^{-d_1(p, q)} = \sum_{p \in F} y_{p, 0}\,e^{-d_1(p, q)}.\]
In matrix form, that is $Z_F \cdot \big(y_{p, 0}\big)_{p \in F} = 0_F$. Since the similarity matrix~$Z_F$ is positive definite, therefore invertible, we conclude $y_{p, 0} = 0$ for all $p \in F$.

Take now any $q \in F$ and $u \in \dr$, and inductively assume that we already know $y_{p, t} = 0$ for all $p \in F$ and $t \in \set{-1, 0, 1}^N$ such that $\fdim_t > \fdim_u$. Consider the following sum:
\[z := \sum_{t \in \sub_u} \sum_{(p, s) \in \dfr{q, t}}\!\!\!\!x_{p, s}\,e^{-d_1(p, q)}.\]
On one hand, $z = 0$ since by assumption $x$ is a solution of the system $FS(0)\,x = 0_{F \times \dr}$. On the other hand, since dotted fragments are disjoint, $z$ is the same as the sum of $x_{p, s}\,e^{-d_1(p, q)}$ for all $(p, s) \in F \times \dr$ such that there exists $t \in \sub_u$ so that $(p, s) \in \dfr{q, t}$. But we may calculate
\begin{align*}
&\some{t \in \sub_u}{(p, s) \in \dfr{q, t}} \\
&\iff \some{t \in I_u}\all{k \in \intcc[\NN]{1}{N}}{\sgn\big(p_k + \tfrac{\chg(F)}{2} s_k - q_k\big) = t_k} \\
&\iff \all{k \in \intcc[\NN]{1}{N}}{u_k \neq 0 \impl \sgn\big(p_k + \tfrac{\chg(F)}{2} s_k - q_k\big) = u_k} \\
&\iff \all{k \in \intcc[\NN]{1}{N}}{u_k = 0 \lor \sgn(p_k - q_k) = u_k \lor \big(p_k = q_k \land s_k = u_k\big).}
\end{align*}
Denote $\widetilde{F}_{q, u} := \set{p \in F}{\all{k \in \intcc[\NN]{1}{N}}{u_k = 0 \lor \sgn(p_k - q_k) = u_k \lor p_k = q_k}}$; then
\[z = \sum_{p \in \widetilde{F}_{q, u}} \ \sum_{\substack{s \in \dr \\ \all{k \in \intcc[\NN]{1}{N}}{0 \neq u_k \neq \sgn(p_k - q_k) \impl s_k = u_k}}}\hspace{-12ex}x_{p, s}\,e^{-d_1(p, q)}.\]
Observe that for every $p \in \widetilde{F}_{q, u}$ and $k \in \intcc[\NN]{1}{N}$:
\begin{itemize}
\item
we either have $u_k = 0 \lor \sgn(p_k - q_k) = u_k$, in which case there is no condition on~$s_k$ in the above sum, i.e.\ $s_k$ can be any of $-1$, $0$, $1$,
\item
or we have $0 \neq u_k \neq \sgn(p_k - q_k)$, in which case there is only one option for~$s_k$, and it cannot be~$0$.
\end{itemize}
Hence, if we define
\[a(p) := \left(\begin{cases} 0 & \text{if $u_k = 0 \lor \sgn(p_k - q_k) = u_k$}, \\ u_k & \text{if $0 \neq u_k \neq \sgn(p_k - q_k)$}, \end{cases}\right)_{k \in \intcc[\NN]{1}{N}}\]
then
\[z = \sum_{p \in \widetilde{F}_{q, u}} y_{p, a(p)}\,e^{-d_1(p, q)}.\]
Note that $\fdim_{a(p)} \geq \fdim_u$, and the equality $\fdim_{a(p)} = \fdim_u$ holds if and only if $\sgn(p_k - q_k) = u_k \impl u_k = 0$ for all $k \in \intcc[\NN]{1}{N}$. Hence, if we define
\[\widehat{F}_{q, u} := \set[1]{p \in \widetilde{F}_{q, u}}{\all{k \in \intcc[\NN]{1}{N}}{\sgn(p_k - q_k) = u_k \impl u_k = 0}}\]
(which simplifies to $\widehat{F}_{q, u} = \set{p \in F}{\all{k \in \intcc[\NN]{1}{N}}{u_k \neq 0 \impl p_k = q_k}}$), then by the induction hypothesis
\[z = \sum_{p \in \widehat{F}_{q, u}} y_{p, a(p)}\,e^{-d_1(p, q)}.\]
Recall from before that $z = 0$, and we clearly have $q \in \widehat{F}_{q, u}$, so we have obtained a homogeneous system of linear equations whose coefficient matrix is the similarity matrix of~$\widehat{F}_{q, u}$. As $\widehat{F}_{q, u} \subseteq F$, this matrix is positive definite, therefore invertible, so $y_{p, a(p)} = 0$ for all $p \in \widehat{F}_{q, u}$. In particular, we have $a(q) = u$, so $y_{q, u} = 0$.

The intuition behind this whole argument is as follows: the sum~$z$ is a linear combination of some~$y_{p, t}$ (which include $y_{q, u}$) with $\fdim_t \geq \fdim_u$, and we have $\fdim_t = \fdim_u$ precisely when $\face_{p, t}(\tfrac{\chg(F)}{2})$ is contained in the affine hull of $\face_{q, u}(\tfrac{\chg(F)}{2})$. We conclude that all these $y_{p, t}$ are zero since they solve a homogeneous system whose coefficient matrix is the similarity matrix of the centers of the faces $\face_{p, t}(\tfrac{\chg(F)}{2})$ in the aforementioned affine hull.

We now show that all $x_{p, s}$ are~$0$ by induction on~$\fdim_s$, this time in increasing order. If $\fdim_s = 0$, then $x_{p, s} = y_{p, s} = 0$.

Take any $q \in F$ and $u \in \dr$, and inductively assume that $x_{p, s} = 0$ for all $p \in F$ and $s \in \dr$ such that $\fdim_s < \fdim_u$. Then
\[x_{q, u} \ = \ y_{q, u} - \!\!\!\!\!\!\!\!\sum_{s \in \sub_u \setminus \set{u, 0_N}}\!\!\!\!\!\!\!\!\!\!x_{p, s} \ = \ 0 - \!\!\!\!\!\!\!\!\sum_{s \in \sub_u \setminus \set{u, 0_N}}\!\!\!\!\!\!\!\!\!\!0 \ = \ 0.\]
\end{proof}

\begin{lemma}\label{lemma:system-solvability}
The following holds for any $N \in \NN$ and $F \in \ifsub(\ell_1^N)$.
\begin{enumerate}
\item
There exists $\rho \in \intoc{0}{\chg(F)}$ such that for every $r \in \intco{0}{\rho}$ the Fragment System has a unique solution.
\item
Let $w = (w_p)_{p \in F}$ be the weighting of~$F$, $(x_{p, s})_{p \in F, s \in \dr}$ the solution of the Fragment System for $r = 0$, and $y_p := \sum_{s \in \dr} x_{p, s}$. Then $y_p = 1 - w_p$ for all $p \in F$.
\end{enumerate}
\end{lemma}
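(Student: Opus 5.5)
For the first item, I will use continuity of the coefficient matrix in~$r$. Each entry of $FS(r)$ is either $0$ or of the form $e^{-d_1(\face_{p, s}(r), q + r u)}$. The index sets $\dfr{q, u}$ do not depend on~$r$, and for fixed $(p, s)$, $(q, u)$ the function $r \mapsto d_1(\face_{p, s}(r), q + r u)$ is continuous on $\intco{0}{\chg(F)}$. This follows, for instance, from the explicit coordinatewise formula in the proof of Lemma~\ref{lemma:dotted-fragments}, or simply because $\proj_{p, s, r}$ is jointly continuous in $(r, x)$. Hence $r \mapsto \det FS(r)$ is continuous on $\intco{0}{\chg(F)}$. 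By Lemma~\ref{lemma:fragment-system-solvable-at-zero}, $\det FS(0) \neq 0$, so there is $\rho \in \intoc{0}{\chg(F)}$ with $\det FS(r) \neq 0$ for all $r \in \intco{0}{\rho}$. For those~$r$ the square system has a unique solution.

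For the second item, I will reuse the summation trick from the first step of the proof of Lemma~\ref{lemma:fragment-system-solvable-at-zero}, now applied to the inhomogeneous system. At $r = 0$ every face and every cube is the single point $p$, so the equation indexed by $(q, u)$ reads
\[\sum_{(p, s) \in \dfr{q, u}} x_{p, s}\, e^{-d_1(p, q)} = \sum_{p \in \fr{q, u}} e^{-d_1(p, q)}.\]
Fix $q \in F$ and sum over $u \in \dr$. On the left, Lemma~\ref{lemma:dotted-fragments}(\ref{lemma:dotted-fragments:partition}) says the dotted fragments partition $F \times \dr$, so the left side becomes $\sum_{p \in F} y_p\, e^{-d_1(p, q)}$. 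On the right, Lemma~\ref{lemma:fragments}(\ref{lemma:fragments:partition}) says the fragments $\fr{q, u}$ for $u \in \dr$ partition $F \setminus \set{q}$, so the right side becomes $\sum_{p \in F \setminus \set{q}} e^{-d_1(p, q)} = \sum_{p \in F} e^{-d_1(p, q)} - 1$.

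This gives $Z_F\, y = Z_F 1_F - 1_F$. Since $Z_F w = 1_F$, it follows that $Z_F (y - 1_F + w) = 0_F$. Because $Z_F$ is positive definite and therefore invertible, $y = 1_F - w$.

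The only real care needed is in the first item: confirming that the $r = 0$ matrix used in Lemma~\ref{lemma:fragment-system-solvable-at-zero} is exactly the limit of $FS(r)$ as $r \decr 0$. This holds because the sparsity pattern is fixed by the $r$-independent definition of the dotted fragments, and the distances converge to $d_1(p, q)$.
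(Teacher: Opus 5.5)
Your proposal is correct and follows the paper's proof essentially step for step. For the first item you use continuity of $FS$ on $\intco{0}{\chg(F)}$ together with $\det FS(0) \neq 0$; for the second you sum the $r = 0$ equations over $u \in \dr$, apply the two partition lemmas to get $Z_F\, y = Z_F (1_F - w)$, and cancel the invertible $Z_F$.
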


\begin{proof}
\
\begin{enumerate}
\item
Clearly $FS\colon \intco{0}{\chg(F)} \to \RR^{(F \times \dr) \times (F \times \dr)} \ism \RR^{\card{F} \cdot (3^N - 1) \times \card{F} \cdot (3^N - 1)}$ is continuous, and by Lemma~\ref{lemma:fragment-system-solvable-at-zero} $\det{FS(0)} \neq 0$. Hence there exists $\rho \in \intoc{0}{\chg(F)}$ such that $\det{FS(r)} \neq 0$ for all $r \in \intco{0}{\rho}$. Therefore the Fragment System is uniquely solvable for these~$r$.
\item
By assumption we have
\[\sum_{(p, s) \in \dfr{q, u}}\!\!\!\!x_{p, s}\,e^{-d_1(p, q)} = \sum_{p \in \fr{q, u}}\!\!e^{-d_1(p, q)}\]
for all $q \in F$, $u \in \dr$. Summing these equations over~$u$ at a fixed~$q$ and using Lemma~\ref{lemma:dotted-fragments}~\ref{lemma:dotted-fragments:partition} gives us
\[\sum_{u \in \dr} \sum_{(p, s) \in \dfr{q, u}}\!\!\!\!x_{p, s}\,e^{-d_1(p, q)} = \sum_{\substack{p \in F \\ s \in \dr}}\!\!x_{p, s}\,e^{-d_1(p, q)} = \sum_{p \in F} y_p\,e^{-d_1(p, q)}\]
on the left-hand side, and (using Lemma~\ref{lemma:fragments}~\ref{lemma:fragments:partition})
\begin{gather*}
\sum_{u \in \dr} \sum_{p \in \fr{q, u}}\!\!e^{-d_1(p, q)} = \sum_{p \in F \setminus \set{q}}\!\!e^{-d_1(p, q)} = \Big(\sum_{p \in F} e^{-d_1(p, q)}\Big) - 1 \\
= \Big(\sum_{p \in F} e^{-d_1(p, q)}\Big) - \sum_{p \in F} e^{-d_1(p, q)} w_p = \sum_{p \in F} e^{-d_1(p, q)} (1 - w_p)
\end{gather*}
on the right-hand side. We can rewrite this in matrix form as
\[Z_F \cdot y = Z_F \cdot (1_F - w),\]
and since the similarity matrix~$Z_F$ is positive definite, therefore invertible, we may cancel it to obtain $y = 1_F - w$.
\end{enumerate}
\end{proof}

\section{Weight Measure Formula and Magnitude Continuity}

In this section we present the two main results of the paper: the formulas for the weight measure and magnitude of a union of cubes, and the continuity of magnitude at all finite subsets in~$\ell_1^N$.

\begin{theorem}\label{theorem:weight-measure-of-union-of-cubes}
Let $N \in \NN$ and $F \in \ifsub(\ell_1^N)$. Define
\[S_F := \set{r \in \intco{0}{\chg(F)}}{\det{FS(r)} \neq 0}.\]
For every $r \in S_F$, let $\big(\alpha_{p, s}(r)\big)_{(p, s) \in F \times \dr}$ be the unique solution of the Fragment System. Then, for every $r \in (S_F)_{> 0}$,
\[\omega_{\cc_F(r)} \ = \ \Big(\sum_{p \in F} \omega_{\cc_p(r)}\Big) \ \ - \sum_{\substack{p \in F \\ s \in \dr}}\!\!\alpha_{p, s}(r)\,\omega_{\face_{p, s}(r)}\]
is the (unique, by Proposition~\ref{proposition:uniqueness-of-weight-measure}) weight measure for the union of cubes $\cc_F(r)$. It follows that
\[\mg\big(\cc_F(r)\big) \ = \ \card{F} \cdot (1 + r)^N - \!\!\sum_{\substack{p \in F \\ s \in \dr}}\!\!\alpha_{p, s}(r) \cdot (1 + r)^{\fdim_s}.\]
\end{theorem}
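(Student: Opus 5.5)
Write $\omega$ for the proposed signed measure. It is a finite signed Borel measure supported on $\cc_F(r)$, because every $\omega_{\face_{p,s}(r)}$ is supported on a face of $\cc_p(r)$. It therefore suffices to check the weight equation
\[\int e^{-d_1(x, a)}\,d\omega(x) = 1 \qquad \text{for every } a \in \cc_F(r).\]
Uniqueness then follows from Proposition~\ref{proposition:uniqueness-of-weight-measure}. The magnitude formula follows by taking total mass, using $\omega_{\cc_p(r)}(\ell_1^N) = (1+r)^N$ and $\omega_{\face_{p,s}(r)}(\ell_1^N) = (1+r)^{\fdim_s}$.

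Since $r < \chg(F)$, the cubes are disjoint, so $a = q + r t$ for a unique $q \in F$ and some $t \in \intcc{-1}{1}^N$. Lemma~\ref{lemma:face-weight-measure-integration} turns every term into an exponential of a distance:
\[\int e^{-d_1(x,a)}\,d\omega(x) = 1 + \sum_{p \in F \setminus \set{q}} e^{-d_1(\cc_p(r), a)} - \sum_{(p,s) \in F \times \dr} \alpha_{p,s}(r)\, e^{-d_1(\face_{p,s}(r), a)}.\]
Here the cube $\cc_q(r)$ contributes $1$ because $a \in \cc_q(r)$. The task is thus to show that the last two sums agree.

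Next I would regroup both sums according to the partitions at $q$. Lemma~\ref{lemma:fragments}~\ref{lemma:fragments:partition} splits $F \setminus \set{q}$ into the fragments $\fr{q,u}$, and Lemma~\ref{lemma:dotted-fragments}~\ref{lemma:dotted-fragments:partition} splits $F \times \dr$ into the dotted fragments $\dfr{q,u}$, with $u \in \dr$ in both cases. For $p \in \fr{q,u}$, Lemma~\ref{lemma:fragments}~\ref{lemma:fragments:projection-formula} gives
\[e^{-d_1(\cc_p(r), a)} = e^{-d_1(\cc_p(r), q+ru)}\, e^{-d_1(\face_{q,u}(r), a)}.\]
For $(p,s) \in \dfr{q,u}$, Lemma~\ref{lemma:dotted-fragments}~\ref{lemma:dotted-fragments:projection-formula} gives the analogous factorization
\[e^{-d_1(\face_{p,s}(r), a)} = e^{-d_1(\face_{p,s}(r), q+ru)}\, e^{-d_1(\face_{q,u}(r), a)}.\]
So within each block $u$, the common factor $e^{-d_1(\face_{q,u}(r), a)}$ comes out of both sums. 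What remains in the block is exactly the two sides of the $(q,u)$ equation of the Fragment System, which $\alpha(r)$ satisfies. Hence the block contributions agree, the two sums agree after summing over $u$, and the integral equals $1$.

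The main point to verify is that the fragment lemmas apply for every $a$ in the cube $\cc_q(r)$, including its boundary; they do, since they are stated for all $t \in \intcc{-1}{1}^N$. Beyond that, the hypothesis $r > 0$ must be used where it is needed. It guarantees that the face measures have the stated form, that $\face_{q,u}(r)$ is a genuine face, and that the cubes are disjoint. The last point ensures $a$ lies in exactly one cube, so that $e^{-d_1(\cc_p(r), a)}$ is computed correctly for $p \neq q$.
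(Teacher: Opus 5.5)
Your proposal is correct and is essentially the paper's own argument: you convert every term to $e^{-d_1(\cdot,\, q + r t)}$ via Lemma~\ref{lemma:face-weight-measure-integration}, regroup by the fragments $\fr{q,u}$ and the dotted fragments $\dfr{q,u}$, factor out $e^{-d_1(\face_{q,u}(r),\, q + r t)}$ using the two projection formulas, apply the $(q,u)$ equation of the Fragment System, and read off the magnitude from total masses. As a minor aside, neither disjointness of the cubes nor uniqueness of $q$ is needed (it suffices to verify the identity for all $q \in F$ and $t \in \intcc{-1}{1}^N$), and the hypothesis that actually drives the argument is $r < \chg(F)$ through the projection formulas, not $r > 0$.
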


\begin{proof}
To prove that the given $\omega_{\cc_F(r)}$ is a weight measure for $\cc_F(r)$, we need to show for all $q \in F$, $t \in \intcc{-1}{1}^N$ that $\int_{\cc_F(r)} e^{-d_1(x, q + r t)}\,d\omega_{\cc_F(r)}(x) = 1$. This is demonstrated by the following calculation.
\begin{align*}
&\int_{\cc_F(r)} e^{-d_1(x, q + r t)}\,d\omega_{\cc_F(r)}(x) \\
&\quad\quad\text{(integral can be restricted to the support of the integrated measure)} \\
&= \Big(\sum_{p \in F} \int_{\cc_p(r)} e^{-d_1(x, q + r t)}\,d\omega_{\cc_p(r)}(x)\Big) - \sum_{\substack{p \in F \\ s \in \dr}}\!\!\bigg(\alpha_{p, s}(r)\!\!\!\!\int\displaylimits_{\face_{p, s}(r)}\!\!\!\!e^{-d_1(x, q + r t)}\,d\omega_{\face_{p, s}(r)}(x)\bigg) \\
&\quad\quad\text{(Lemma~\ref{lemma:face-weight-measure-integration})} \\
&= \Big(\sum_{p \in F} e^{-d_1(\cc_p(r), q + r t)}\Big) - \sum_{\substack{p \in F \\ s \in \dr}}\!\!\alpha_{p, s}(r)\,e^{-d_1(\face_{p, s}(r), q + r t)} \\
&\quad\quad\text{(Lemma~\ref{lemma:dotted-fragments}~\ref{lemma:dotted-fragments:partition})} \\
&= \Big(\sum_{p \in F} e^{-d_1(\cc_p(r), q + r t)}\Big) - \sum_{u \in \dr} \sum_{(p, s) \in \dfr{q, u}}\!\!\!\!\!\!\alpha_{p, s}(r)\,e^{-d_1(\face_{p, s}(r), q + r t)} \\
&\quad\quad\text{(Lemma~\ref{lemma:dotted-fragments}~\ref{lemma:dotted-fragments:projection-formula})} \\
&= \Big(\sum_{p \in F} e^{-d_1(\cc_p(r), q + r t)}\Big) - \sum_{u \in \dr} \bigg(e^{-d_1(\face_{q, u}(r), q + r t)}\!\!\!\!\!\!\sum_{(p, s) \in \dfr{q, u}}\!\!\!\!\!\!\alpha_{p, s}(r)\,e^{-d_1(\face_{p, s}(r), q + r u)}\bigg) \\
&\quad\quad\text{($\alpha_{p, s}$ are solutions of the Fragment System)} \\
&= \Big(\sum_{p \in F} e^{-d_1(\cc_p(r), q + r t)}\Big) - \sum_{u \in \dr} \bigg(e^{-d_1(\face_{q, u}(r), q + r t)} \sum_{p \in \fr{q, u}}\!\!e^{-d_1(\cc_p(r), q + r u)}\bigg) \\
&\quad\quad\text{(Lemma~\ref{lemma:fragments}~\ref{lemma:fragments:projection-formula})} \\
&= \Big(\sum_{p \in F} e^{-d_1(\cc_p(r), q + r t)}\Big) - \sum_{u \in \dr} \sum_{p \in \fr{q, u}}\!\!e^{-d_1(\cc_p(r), q + r t)} \\
\end{align*}
\begin{align*}
&\quad\quad\text{(Lemma~\ref{lemma:fragments}~\ref{lemma:fragments:partition})} \\
&= \Big(\sum_{p \in F} e^{-d_1(\cc_p(r), q + r t)}\Big) - \sum_{p \in F \setminus \set{q}}\!\!e^{-d_1(\cc_p(r), q + r t)} \\
&\quad\quad\text{(all but one term cancel)} \\
&= e^{-d_1(\cc_q(r), q + r t)} = e^{-0} = 1
\end{align*}

We calculate magnitude from the weight measure in the usual way (recall the values of magnitude for cubes and their faces from Section~\ref{section:cubes}).
\begin{gather*}
\mg\big(\cc_F(r)\big) = \int_{\cc_F(r)} d\omega_{\cc_F(r)}(x) \\
= \Big(\sum_{p \in F} \int_{\cc_p(r)} d\omega_{\cc_p(r)}(x)\Big) - \sum_{\substack{p \in F \\ s \in \dr}}\!\!\bigg(\alpha_{p, s}(r)\!\!\!\!\int\displaylimits_{\face_{p, s}(r)}\!\!\!\!d\omega_{\face_{p, s}(r)}(x)\bigg) \\
= \Big(\sum_{p \in F} \mg\big(\cc_p(r)\big)\Big) - \sum_{\substack{p \in F \\ s \in \dr}}\!\!\alpha_{p, s}(r) \cdot \mg\big(\face_{p, s}(r)\big) \\
= \Big(\sum_{p \in F} (1 + r)^N\Big) - \sum_{\substack{p \in F \\ s \in \dr}}\!\!\alpha_{p, s}(r) \cdot (1 + r)^{\fdim_s} \\
= \card{F} \cdot (1 + r)^N - \!\!\sum_{\substack{p \in F \\ s \in \dr}}\!\!\alpha_{p, s}(r) \cdot (1 + r)^{\fdim_s}
\end{gather*}
\end{proof}

We claim that the weight of a cube around a point in a finite set tends to the weight of the point as the size of the cube tends towards~$0$, and so the weight of a union of cubes (i.e.\ its magnitude) tends towards the magnitude of the finite set.

\begin{lemma}\label{lemma:weight-limits}
Let the notation and assumptions be as in Theorem~\ref{theorem:weight-measure-of-union-of-cubes}, and additionally, let $w = (w_p)_{p \in F}$ be the weighting of~$F$.
\begin{enumerate}
\item
The maps $\alpha_{p, s}\colon S_F \to \RR$ are continuous.
\item
There exists $\rho \in \intoc{0}{\chg(F)}$ such that $\intco{0}{\rho} \subseteq S_F$.
\item
For every $p \in F$,
\[\lim_{r \decr 0} \omega_{\cc_F(r)}\big(\cc_p(r)\big) = w_p.\]
Hence
\[\lim_{r \decr 0} \mg\big(\cc_F(r)\big) = \mg(F).\]
\end{enumerate}
\end{lemma}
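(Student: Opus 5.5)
For item 1, I would use Cramer's rule. On $S_F$ the Fragment System has coefficient matrix $FS(r)$ and right-hand side $\big(\sum_{p \in \fr{q,u}} e^{-d_1(\cc_p(r), q + r u)}\big)_{(q,u)}$, and both depend continuously on $r \in \intco{0}{\chg(F)}$. The entries are exponentials of distances from closed sets whose endpoints move linearly in $r$, hence they are continuous. Since $\det FS(r) \neq 0$ on $S_F$, each $\alpha_{p,s}(r)$ is a quotient of two continuous determinants with nonvanishing denominator, so it is continuous. Item 2 is exactly Lemma~\ref{lemma:system-solvability}(1), which rests on Lemma~\ref{lemma:fragment-system-solvable-at-zero} and continuity of $\det FS$. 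In particular $0 \in S_F$ and $\alpha_{p,s}(0) = x_{p,s}$, the solution for $r = 0$ from Lemma~\ref{lemma:system-solvability}(2).

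For item 3, I would fix $p \in F$ and take $r \in \intoo{0}{\rho}$. The cubes in $\cc_F(r)$ are pairwise disjoint because $r < \chg(F)$. Each face $\face_{p',s}(r)$ lies inside $\cc_{p'}(r)$, so its weight measure is supported there. Evaluating the formula of Theorem~\ref{theorem:weight-measure-of-union-of-cubes} on $\cc_p(r)$ therefore keeps only the terms indexed by $p$:
\[\omega_{\cc_F(r)}\big(\cc_p(r)\big) = \mg\big(\cc_p(r)\big) - \sum_{s \in \dr} \alpha_{p,s}(r)\,\mg\big(\face_{p,s}(r)\big) = (1+r)^N - \sum_{s \in \dr} \alpha_{p,s}(r)\,(1+r)^{\fdim_s}.\]
Letting $r \decr 0$ and using continuity of $\alpha_{p,s}$ at $0$ gives the limit $1 - \sum_{s \in \dr} x_{p,s} = 1 - y_p$. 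By Lemma~\ref{lemma:system-solvability}(2) this equals $1 - (1 - w_p) = w_p$.

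For the magnitude statement, I would use the same disjointness. The measure $\omega_{\cc_F(r)}$ is supported on $\cc_F(r) = \bigsqcup_{p \in F} \cc_p(r)$, so
\[\mg\big(\cc_F(r)\big) = \sum_{p \in F} \omega_{\cc_F(r)}\big(\cc_p(r)\big) \longrightarrow \sum_{p \in F} w_p = \mg(F).\]

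The only delicate point is confirming that the formula in Theorem~\ref{theorem:weight-measure-of-union-of-cubes} applies for every $r \in \intoo{0}{\rho}$, that is, $\intoo{0}{\rho} \subseteq (S_F)_{>0}$. This follows from item 2. Beyond that, the argument needs only the continuity of the entries of $FS(r)$ and of its right-hand side at $r = 0$, which follows from the explicit distance formulas in Lemmas~\ref{lemma:fragments} and~\ref{lemma:dotted-fragments}.
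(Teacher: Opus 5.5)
Your proposal is correct and follows essentially the same route as the paper: continuity of $\alpha$ via inverting $FS(r)$ (your Cramer's rule is equivalent), item 2 directly from Lemma~\ref{lemma:system-solvability}, and then evaluating the weight measure formula on $\cc_p(r)$ and passing to the limit using $\sum_{s} \alpha_{p,s}(0) = 1 - w_p$. Your remark that the cubes are pairwise disjoint and that each face lies in its own cube only spells out a step the paper uses implicitly.
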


\begin{proof}
\
\begin{enumerate}
\item
We have $\alpha(r) = FS(r)^{-1} \cdot \big(\sum_{p \in \fr{q, u}}\!\!e^{-d_1(\cc_p(r), q + r u)}\big)_{q \in F,\,u \in \dr}$ on~$S_F$, so clearly $\alpha\colon S_F \to \RR^{F \times \dr} \ism \RR^{\card{F} \cdot (3^N - 1)}$ is continuous.
\item
By Lemma~\ref{lemma:system-solvability}.
\item
We have
\begin{gather*}
\lim_{r \decr 0} \omega_{\cc_F(r)}\big(\cc_p(r)\big) = \lim_{r \decr 0} \int_{\cc_p(r)} d\omega_{\cc_F(r)}(x) \\
= \lim_{r \decr 0} \bigg(\Big(\int_{\cc_p(r)} d\omega_{\cc_p(r)}(x)\Big) - \sum_{s \in \dr}\!\!\Big(\alpha_{p, s}(r)\!\!\!\!\int\displaylimits_{\face_{p, s}(r)}\!\!\!\!d\omega_{\face_{p, s}(r)}(x)\Big)\bigg) \\
= \lim_{r \decr 0} \bigg(\mg\big(\cc_p(r)\big) - \sum_{s \in \dr}\!\!\alpha_{p, s}(r) \cdot \mg\big(\face_{p, s}(r)\big)\bigg) \\
= \lim_{r \decr 0} \bigg((1 + r)^N - \sum_{s \in \dr}\!\!\alpha_{p, s}(r) \cdot (1 + r)^{\fdim_s}\bigg) \\
= 1 - \!\!\sum_{s \in \dr}\!\!\alpha_{p, s}(0) = 1 - (1 - w_p) = w_p
\end{gather*}
where the penultimate equality holds by Lemma~\ref{lemma:system-solvability}.

Consequently
\begin{gather*}
\lim_{r \decr 0} \mg\big(\cc_F(r)\big) = \lim_{r \decr 0} \omega_{\cc_F(r)}\big(\cc_F(r)\big) = \lim_{r \decr 0} \sum_{p \in F} \omega_{\cc_F(r)}\big(\cc_p(r)\big) \\
= \sum_{p \in F} \lim_{r \decr 0} \omega_{\cc_F(r)}\big(\cc_p(r)\big) = \sum_{p \in F} w_p = \mg(F).
\end{gather*}
\end{enumerate}
\end{proof}

This is enough to conclude continuity of magnitude for finite subsets of~$\ell_1^N$.

\begin{theorem}\label{theorem:finite-magnitude-continuity}
For every $N \in \NN$, the map $\mg[\icsub(\ell_1^N)]$ is continuous at every $F \in \ifsub(\ell_1^N)$. In particular, the map $\mg[\ifsub(\ell_1^N)]$ is continuous.
\end{theorem}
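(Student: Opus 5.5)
Our plan is a squeeze argument built on inclusion-monotonicity of magnitude in the positive definite space $\ell_1^N$, combined with Lemma~\ref{lemma:weight-limits}. Fix $F \in \ifsub(\ell_1^N)$ and $\varepsilon > 0$. We want $\delta > 0$ such that every $K \in \icsub(\ell_1^N)$ with $d_H(K, F) < \delta$ satisfies $|\mg(K) - \mg(F)| < \varepsilon$. Two facts are needed. First, magnitude of compact subsets of $\ell_1^N$ is well defined, being the supremum of magnitudes of finite subsets. Second, it is inclusion-monotone on $\csub(\ell_1^N)$. Both are standard for positive definite spaces by Meckes' results; the paper already cites monotonicity for finite sets, and the compact case follows from the supremum characterization.

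\textbf{Upper bound.} By Lemma~\ref{lemma:weight-limits} choose $r \in \intoo{0}{\chg(F)}$ with $r$ in the interval where the lemma applies, such that $\mg(\cc_F(r)) < \mg(F) + \varepsilon$. The $\ell_1$-ball of radius $r$ around $p$ is contained in $\cc_p(r)$. Hence if $d_H(K, F) < r$, every point of $K$ lies within $\ell_1$-distance $r$ of some $p \in F$, so $K \subseteq \cc_F(r)$. Monotonicity then gives
\[\mg(K) \leq \mg(\cc_F(r)) < \mg(F) + \varepsilon.\]

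\textbf{Lower bound.} This is the easier direction and uses continuity of magnitude on finite configurations of fixed cardinality. Let $n = \card{F}$ and take $\eta$ less than half the minimal distance between distinct points of $F$. If $d_H(K, F) < \eta$, then for each $p \in F$ we can pick some $k_p \in K$ with $d_1(k_p, p) < \eta$. The points $k_p$ are pairwise distinct, so $F' := \set{k_p}{p \in F}$ has $n$ points and $F' \subseteq K$, and monotonicity gives $\mg(K) \geq \mg(F')$. Now $\mg(F') = \mathrm{sum}(Z_{F'}^{-1})$. The entries of $Z_{F'}$ depend continuously on the $k_p$, and $Z_F$ is invertible, so matrix inversion is continuous near $Z_F$. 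Shrinking $\eta$ therefore ensures $\mg(F') > \mg(F) - \varepsilon$. Taking $\delta = \min(r, \eta)$ finishes the proof.

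The second claim follows by restricting to $\ifsub(\ell_1^N) \subseteq \icsub(\ell_1^N)$, whose Hausdorff metric is the restricted one.

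The main obstacle is the upper bound, which is essentially already done by Lemma~\ref{lemma:weight-limits}. The remaining care point is making sure monotonicity and the definition of magnitude for general compact $K$ agree with the weight-measure definition used for $\cc_F(r)$. For that we invoke the equivalence of definitions for positive definite compact spaces cited in the paper (Meckes).
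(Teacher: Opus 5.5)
Your proposal is correct and follows the paper's route. The paper's proof is just Lemma~\ref{lemma:weight-limits} combined with \cite[Corollary~4.3]{kalisnik2026continuitymagnitudeskewfinite}, which is the reduction ``$\lim_{r \decr 0} \mg(\cc_F(r)) = \mg(F)$ implies continuity at $F$'' that you prove by hand, and your squeeze is a valid self-contained substitute for that citation: the upper bound comes from $K \subseteq \cc_F(r)$ plus monotonicity via Meckes' supremum characterization, and the lower bound from a $\card{F}$-point subset $F' \subseteq K$ near $F$ together with continuity of $Z \mapsto \mathrm{sum}(Z^{-1})$ near $Z_F$.
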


\begin{proof}
The claim follows from~\cite[Corollary~4.3]{kalisnik2026continuitymagnitudeskewfinite} since we have $\lim_{r \decr 0} \mg\big(\cc_F(r)\big) = \mg(F)$ by Lemma~\ref{lemma:weight-limits}.
\end{proof}

\section{Examples}

In this section we illustrate our theory on a few examples. But first, let us make an observation which makes solving the Fragment System quicker.

It follows easily from the definition of dotted fragments that $(q, u) \in \dfr{q, u}$ for all $(q, u) \in F \times \dr$ (where $F \in \ifsub(\ell_1^N)$). Call a dotted fragment \df{trivial} when this is its only point, i.e.\ when $\dfr{q, u} = \set{(q, u)}$.

\begin{proposition}\label{proposition:trivial-dotted-fragments}
Let $N \in \NN$ and $F \in \ifsub(\ell_1^N)$. Let $q \in F$ and $u \in \dr$ be such that the dotted fragment $\dfr{q, u}$ is trivial. Then the following holds.
\begin{enumerate}
\item
$\fr{q, u} = \emptyset$.
\item
$\alpha_{q, u}(r) = 0$ for every $r \in S_F$ (where $\alpha_{q, u}(r)$ is a coefficient in the formula for the weight measure $\omega_{\cc_F(r)}$, as given by Theorem~\ref{theorem:weight-measure-of-union-of-cubes}).
\end{enumerate}
\end{proposition}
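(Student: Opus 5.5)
Set $g := \tfrac{\chg(F)}{2}$. For the first item, argue by contradiction: take $p \in \fr{q, u}$ and show $(p, u) \in \dfr{q, u}$. Since $p \neq q$ (every point of $\fr{q, u}$ differs from $q$ because $u \neq 0_N$), the pair $(p, u)$ differs from $(q, u)$, contradicting triviality. To verify $(p, u) \in \dfr{q, u}$, check coordinatewise that $\sgn(p_k + g u_k - q_k) = u_k$. There are three cases.
\begin{itemize}
\item If $u_k = 0$, then $p_k = q_k$ and the sign is $0$.
\item If $u_k = 1$, then $p_k > q_k$, so $p_k - q_k \geq 2\chg(F) > 0$, and adding $g > 0$ keeps it positive.
\item If $u_k = -1$, the argument is symmetric.
\end{itemize}
Hence $p + g u \in \fr[\dotted{F}{g}]{q, u}$. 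This is routine.

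For the second item, look at the equation of the Fragment System indexed by $(q, u)$. Triviality means the left-hand sum has the single term $\alpha_{q, u}(r)\,e^{-d_1(\face_{q, u}(r), q + r u)}$. Since $q + r u \in \face_{q, u}(r)$, the exponential equals $1$, so the left-hand side is just $\alpha_{q, u}(r)$. By the first item, the right-hand side is an empty sum, hence $0$. Therefore $\alpha_{q, u}(r) = 0$ for every $r \in S_F$, where the solution is unique and so really is the $\alpha$ of Theorem~\ref{theorem:weight-measure-of-union-of-cubes}.

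The only point needing care is the first item, specifically checking membership in the dotted fragment through the $g$-shifted point. Everything else follows directly from the definitions.
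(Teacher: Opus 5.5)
Your proposal is correct and follows the same route as the paper: item 1 uses $q \notin \fr{q, u}$ together with $p \in \fr{q, u} \Rightarrow (p, u) \in \dfr{q, u}$, and item 2 observes that the $(q, u)$-equation of the Fragment System reduces to $x_{q, u} = 0$. Your coordinatewise sign check and your remark that $q + r u \in \face_{q, u}(r)$ (including at $r = 0$) just fill in details the paper leaves implicit.
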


\begin{proof}
\
\begin{enumerate}
\item
Follows from the facts that $q \notin \fr{q, u}$ and if $p \in \fr{q, u}$, then $(p, u) \in \dfr{q, u}$.
\item
Because under the given assumptions, one of the equations in the Fragment System ends up being $x_{q, u} = 0$.
\end{enumerate}
\end{proof}

In practice many dotted fragments are trivial, so we can quickly solve a good portion of the Fragment System.

\begin{example}\label{example:pair}
Take $a \in \RR_{> 0}$ and let $F := \set{(0, 0), (a, 0)} \subseteq \ell_1^2$. Then $\chg(F) = \tfrac{a}{2}$. Let $r \in \intoo{0}{\chg(F)}$. Using Theorem~\ref{theorem:weight-measure-of-union-of-cubes}, we could determine $\omega_{\cc_F(r)}$ by solving a linear system with $16$~equations and unknowns, but we can streamline this significantly.

Most dotted fragments are trivial; Proposition~\ref{proposition:trivial-dotted-fragments} gives us $\alpha_{(0, 0), (-1, -1)} = \alpha_{(0, 0), (0, -1)} = \alpha_{(0, 0), (-1, 0)} = \alpha_{(0, 0), (-1, 1)} = \alpha_{(0, 0), (0, 1)} = \alpha_{(a, 0), (0, -1)} = \alpha_{(a, 0), (1, -1)} = \alpha_{(a, 0), (1, 0)} = \alpha_{(a, 0), (0, 1)} = \alpha_{(a, 0), (1, 1)} = 0$. Due to symmetry, we must have $\alpha_{(0, 0), (1, -1)} = \alpha_{(0, 0), (1, 1)} = \alpha_{(a, 0), (-1, -1)} = \alpha_{(a, 0), (-1, 1)} =: \alpha$ and $\alpha_{(0, 0), (1, 0)} = \alpha_{(a, 0), (-1, 0)} =: \beta$. The following picture illustrates these coefficients.
\begin{center}
\begin{tikzpicture}[scale = 3]
\def\r{0.25}
\def\pointsize{0.5pt}
\filldraw[fill = black!10, thick] (-\r, -\r) rectangle (\r, \r);
\filldraw[fill = black!10, thick] (1-\r, -\r) rectangle (1+\r, \r);
\filldraw (-\r, -\r) circle (\pointsize) node [below left] {$0$};
\filldraw (\r, -\r) circle (\pointsize) node [below right] {$\alpha$};
\filldraw (-\r, \r) circle (\pointsize) node [above left] {$0$};
\filldraw (\r, \r) circle (\pointsize) node [above right] {$\alpha$};
\draw (-\r, 0) node [left] {$0$};
\draw (\r, 0) node [right] {$\beta$};
\draw (0, -\r) node [below] {$0$};
\draw (0, \r) node [above] {$0$};
\filldraw (1-\r, -\r) circle (\pointsize) node [below left] {$\alpha$};
\filldraw (1-\r, \r) circle (\pointsize) node [above left] {$\alpha$};
\draw (1-\r, 0) node [left] {$\beta$};
\filldraw (1+\r, -\r) circle (\pointsize) node [below right] {$0$};
\filldraw (1+\r, \r) circle (\pointsize) node [above right] {$0$};
\draw (1+\r, 0) node [right] {$0$};
\draw (1, -\r) node [below] {$0$};
\draw (1, \r) node [above] {$0$};
\end{tikzpicture}
\end{center}
The Fragment System equation, indexed by $((0, 0), (1, 1))$, then reduces to $\alpha + e^{-(a - 2r)} \alpha = 0$, so $\alpha = 0$. This leaves the equation, indexed by $((0, 0), (1, 0))$, which simplifies to $\beta + e^{-(a - 2r)} \beta = e^{-(a - 2r)}$, so $\beta = \tfrac{e^{-(a - 2r)}}{1 + e^{-(a - 2r)}}$. Hence
\[\omega_{\cc_F(r)} = \omega_{\cc_{(0, 0)}(r)} + \omega_{\cc_{(a, 0)}(r)} - \tfrac{e^{-(a - 2r)}}{1 + e^{-(a - 2r)}} \Big(\omega_{\face_{(0, 0), (1, 0)}(r)} + \omega_{\face_{(a, 0), (-1, 0)}(r)}\Big)\]
and
\[\mg(\cc_F(r)) = 2 (1 + r)^2 - \tfrac{e^{-(a - 2r)}}{1 + e^{-(a - 2r)}} 2 (1 + r).\]
However, this is a simple enough example, that we can calculate this also via more elementary methods. Note that the given $\cc_F(r)$ is a $1$-product of two sets, and the magnitude (resp.\ the weight measure) of a $1$-product is the product of magnitudes (resp.\ weight measures) of factors~(see~\cite[Proposition 5.3.7.]{LM17}). Using this and recalling the magnitude of a (union of) interval(s)~(see~\cite[Corollary~5.4.3]{LM17}), we calculate
\begin{gather*}
\mg(\cc_F(r)) = \mg\big((\intcc{-r}{r} \cup \intcc{a - r}{a + r}) \times_1 \intcc{-r}{r}\big) = \mg\big(\intcc{-r}{r} \cup \intcc{a - r}{a + r}\big) \cdot \mg\big(\intcc{-r}{r}\big) \\
= \big(1 + 2r + \tanh(\tfrac{a - 2r}{2})\big) \cdot \big(1 + r\big) = \big(1 + 2r + \tfrac{1 - e^{-(a - 2r)}}{1 + e^{-(a - 2r)}}\big) \cdot \big(1 + r\big) = \big(2 + 2r - \tfrac{2 e^{-(a - 2r)}}{1 + e^{-(a - 2r)}}\big) \cdot \big(1 + r\big);
\end{gather*}
we see that we got the same result.

We can also verify the correctness of the above formula for the weight measure by rewriting it in terms of Lebesgue measures. Calculate:
\begin{align*}
\omega_{\cc_F(r)} &= \tfrac{1}{4} \Big(\lambda_{\intcc{-r}{r} \times \intcc{-r}{r}}^2 + \lambda_{\intcc{-r}{r} \times \set{-r}}^1 + \lambda_{\intcc{-r}{r} \times \set{r}}^1 + \lambda_{\set{-r} \times \intcc{-r}{r}}^1 + \lambda_{\set{r} \times \intcc{-r}{r}}^1 \\
&+ \delta_{(-r, -r)} + \delta_{(r, -r)} + \delta_{(-r, r)} + \delta_{(r, r)}\Big) \\
&+ \tfrac{1}{4} \Big(\lambda_{\intcc{a - r}{a + r} \times \intcc{-r}{r}}^2 + \lambda_{\intcc{a - r}{a + r} \times \set{-r}}^1 + \lambda_{\intcc{a - r}{a + r} \times \set{r}}^1 + \lambda_{\set{a - r} \times \intcc{-r}{r}}^1 + \lambda_{\set{a + r} \times \intcc{-r}{r}}^1 \\
&+ \delta_{(a - r, -r)} + \delta_{(a + r, -r)} + \delta_{(a - r, r)} + \delta_{(a + r, r)}\Big) \\
&- \tfrac{e^{-(a - 2r)}}{1 + e^{-(a - 2r)}} \tfrac{1}{2} \Big(\lambda_{\set{r} \times \intcc{-r}{r}}^1 + \delta_{(r, -r)} + \delta_{(r, r)} + \lambda_{\set{a - r} \times \intcc{-r}{r}}^1 + \delta_{(a - r, -r)} + \delta_{(a - r, r)}\Big) \\
&= \tfrac{1}{4} \Big(\lambda_{\intcc{-r}{r} \times \intcc{-r}{r}}^2 + \lambda_{\intcc{a - r}{a + r} \times \intcc{-r}{r}}^2 \\
&+ \lambda_{\intcc{-r}{r} \times \set{-r}}^1 + \lambda_{\intcc{-r}{r} \times \set{r}}^1 + \lambda_{\set{-r} \times \intcc{-r}{r}}^1 + \lambda_{\intcc{a - r}{a + r} \times \set{-r}}^1 + \lambda_{\intcc{a - r}{a + r} \times \set{r}}^1 + \lambda_{\set{a + r} \times \intcc{-r}{r}}^1 \\
&+ \delta_{(-r, -r)} + \delta_{(-r, r)} + \delta_{(a + r, -r)} + \delta_{(a + r, r)}\Big) \\
&+ \tfrac{1}{4} \tfrac{1 - e^{-(a - 2r)}}{1 + e^{-(a - 2r)}} \Big(\lambda_{\set{r} \times \intcc{-r}{r}}^1 + \lambda_{\set{a - r} \times \intcc{-r}{r}}^1 + \delta_{(r, -r)} + \delta_{(r, r)} + \delta_{(a - r, -r)} + \delta_{(a - r, r)}\Big).
\end{align*}
On the other hand,
\begin{align*}
\omega_{\intcc{-r}{r} \cup \intcc{a - r}{a + r}} \times \omega_{\intcc{-r}{r}} &= \Big(\tfrac{1}{2} \big(\lambda_{\intcc{-r}{r}}^1 + \lambda_{\intcc{a - r}{a + r}}^1 + \delta_{-r} + \delta_{a + r}\big) + \tfrac{1}{2} \tanh(\tfrac{a - 2r}{2}) \big(\delta_{r} + \delta_{a - r}\big)\Big) \\
&\cdot \tfrac{1}{2}\Big(\lambda_{\intcc{-r}{r}}^1 + \delta_{-r} + \delta_{r}\Big) \\
&= \tfrac{1}{4} \Big(\lambda_{\intcc{-r}{r}}^1 + \lambda_{\intcc{a - r}{a + r}}^1 + \delta_{-r} + \delta_{a + r} + \tfrac{1 - e^{-(a - 2r)}}{1 + e^{-(a - 2r)}} \big(\delta_{r} + \delta_{a - r}\big)\Big) \\
&\cdot \Big(\lambda_{\intcc{-r}{r}}^1 + \delta_{-r} + \delta_{r}\Big).
\end{align*}
We see that we again get the same result.

In any case, we have
\[\lim_{r \decr 0} \mg\big(\cc_F(r)\big) = \lim_{r \decr 0} \Big(2 (1 + r)^2 - \tfrac{e^{-(a - 2r)}}{1 + e^{-(a - 2r)}} 2 (1 + r)\Big) = 2 - \tfrac{2 e^{-a}}{1 + e^{-a}} = \tfrac{2}{1 + e^{-a}}.\]
This matches Lemma~\ref{lemma:weight-limits} since indeed by~\cite[Example 2.1.1 ii)] {leinster2010} $\mg\big(\set{(0, 0), (a, 0)}\big) = \tfrac{2}{1 + e^{-a}}$.
\end{example}

\begin{example}
Observe that if $F \in \ifsub(\ell_1^N)$ is `skew' in the sense of~\cite[Definition~4.4]{kalisnik2026continuitymagnitudeskewfinite}, then for every $(q, u) \in F \times \dr$, if $\fdim_u > 0$, then the dotted fragment $\dfr{q, u}$ is trivial. Using Proposition~\ref{proposition:trivial-dotted-fragments}, we see how the formula for the weight measure in~\cite[Theorem~5.1]{kalisnik2026continuitymagnitudeskewfinite} is a special case of the one in Theorem~\ref{theorem:weight-measure-of-union-of-cubes} above. In particular, examples in~\cite[Section~6]{kalisnik2026continuitymagnitudeskewfinite} also serve as examples of the theory, developed in this paper.
\end{example}

\begin{example}
Let $F := \set[1]{(0, 0), (1, 0), (-1, 0), (0, 1), (0, -1)} \subseteq \ell_1^2$, so $\chg(F) = \tfrac{1}{2}$. Let $r \in \intoo{0}{\chg(F)}$. The Fragment System has $40$~equations and unknowns, but taking into account trivial dotted fragments and symmetry, we can cut them down to~$4$. We will denote the four unknown coefficients by $\alpha$, $\beta$, $\gamma$, $\delta$. For visualization, we write the coefficients next to their corresponding faces in the following picture.
\begin{center}
\begin{tikzpicture}[scale = 3]
\def\r{0.3}
\def\pointsize{0.5pt}
\filldraw[fill = black!10, thick] (-\r, -\r) rectangle (\r, \r);
\filldraw[fill = black!10, thick] (1-\r, -\r) rectangle (1+\r, \r);
\filldraw[fill = black!10, thick] (-1-\r, -\r) rectangle (-1+\r, \r);
\filldraw[fill = black!10, thick] (-\r, 1-\r) rectangle (\r, 1+\r);
\filldraw[fill = black!10, thick] (-\r, -1-\r) rectangle (\r, -1+\r);
\filldraw (-\r, -\r) circle (\pointsize) node [below left] {$\alpha$};
\filldraw (\r, -\r) circle (\pointsize) node [below right] {$\alpha$};
\filldraw (-\r, \r) circle (\pointsize) node [above left] {$\alpha$};
\filldraw (\r, \r) circle (\pointsize) node [above right] {$\alpha$};
\draw (-\r, 0) node [left] {$\beta$};
\draw (\r, 0) node [right] {$\beta$};
\draw (0, -\r) node [below] {$\beta$};
\draw (0, \r) node [above] {$\beta$};
\filldraw (1-\r, -\r) circle (\pointsize) node [below left] {$\gamma$};
\filldraw (1-\r, \r) circle (\pointsize) node [above left] {$\gamma$};
\filldraw (-1+\r, -\r) circle (\pointsize) node [below right] {$\gamma$};
\filldraw (-1+\r, \r) circle (\pointsize) node [above right] {$\gamma$};
\filldraw (-\r, 1-\r) circle (\pointsize) node [below left] {$\gamma$};
\filldraw (\r, 1-\r) circle (\pointsize) node [below right] {$\gamma$};
\filldraw (-\r, -1+\r) circle (\pointsize) node [above left] {$\gamma$};
\filldraw (\r, -1+\r) circle (\pointsize) node [above right] {$\gamma$};
\draw (1-\r, 0) node [left] {$\delta$};
\draw (-1+\r, 0) node [right] {$\delta$};
\draw (0, 1-\r) node [below] {$\delta$};
\draw (0, -1+\r) node [above] {$\delta$};
\filldraw (1+\r, -\r) circle (\pointsize) node [below right] {$0$};
\filldraw (1+\r, \r) circle (\pointsize) node [above right] {$0$};
\filldraw (-1-\r, -\r) circle (\pointsize) node [below left] {$0$};
\filldraw (-1-\r, \r) circle (\pointsize) node [above left] {$0$};
\filldraw (-\r, 1+\r) circle (\pointsize) node [above left] {$0$};
\filldraw (\r, 1+\r) circle (\pointsize) node [above right] {$0$};
\filldraw (-\r, -1-\r) circle (\pointsize) node [below left] {$0$};
\filldraw (\r, -1-\r) circle (\pointsize) node [below right] {$0$};
\draw (1+\r, 0) node [right] {$0$};
\draw (1, -\r) node [below] {$0$};
\draw (1, \r) node [above] {$0$};
\draw (-1-\r, 0) node [left] {$0$};
\draw (-1, -\r) node [below] {$0$};
\draw (-1, \r) node [above] {$0$};
\draw (0, 1+\r) node [above] {$0$};
\draw (-\r, 1) node [left] {$0$};
\draw (\r, 1) node [right] {$0$};
\draw (0, -1-\r) node [below] {$0$};
\draw (-\r, -1) node [left] {$0$};
\draw (\r, -1) node [right] {$0$};
\end{tikzpicture}
\end{center}

Taking this into account, the Fragment System equations, indexed by $((0, 0), (1, 0))$ and $((1, 0), (-1, 0))$, reduce to
\begin{align*}
\beta + \delta e^{-(1 - 2r)} &= e^{-(1 - 2r)}, \\
\beta \big(e^{-(1 - 2r)} + e^{-1}\big) + \delta \big(1 + e^{-(2 - 2r)}\big) &= e^{-(1 - 2r)} + e^{-(2 - 2r)}.
\end{align*}
From this we get $\beta = \delta = \tfrac{e^{-(1 - 2r)}}{1 + e^{-(1 - 2r)}}$. Now take the equations, indexed by $((0, 0), (1, 1))$ and $((1, 0), (-1, 1))$; we get
\begin{align*}
\alpha + 2 \gamma e^{-(1 - 2r)} &= 0, \\
\alpha \big(e^{-(1 - 2r)} + e^{-1}\big) + \beta e^{-(1 - 2r)} + \gamma \big(1 + e^{-(2 - 2r)}\big) + \delta e^{-2(1 - 2r)} &= e^{-2(1 - 2r)}.
\end{align*}
Plugging in the already calculated $\beta$ and~$\delta$, solving the obtained system gives us $\alpha = \gamma = 0$. Hence
\begin{align*}
\omega_{\cc_F(r)} &= \omega_{\cc_{(0, 0)}(r)} + \omega_{\cc_{(1, 0)}(r)} + \omega_{\cc_{(-1, 0)}(r)} + \omega_{\cc_{(0, 1)}(r)} + \omega_{\cc_{(0, -1)}(r)} \\
&- \tfrac{e^{-(1 - 2r)}}{1 + e^{-(1 - 2r)}} \Big(\omega_{\face_{(0, 0), (1, 0)}(r)} + \omega_{\face_{(0, 0), (-1, 0)}(r)} + \omega_{\face_{(0, 0), (0, 1)}(r)} + \omega_{\face_{(0, 0), (0, -1)}(r)} \\
&+ \omega_{\face_{(1, 0), (-1, 0)}(r)} + \omega_{\face_{(-1, 0), (1, 0)}(r)} + \omega_{\face_{(0, 1), (0, -1)}(r)} + \omega_{\face_{(0, -1), (0, 1)}(r)}\Big)
\end{align*}
and
\[\mg\big(\cc_F(r)\big) = 5 (1 + r)^2 - \tfrac{e^{-(1 - 2r)}}{1 + e^{-(1 - 2r)}} 8 (1 + r).\]
Note that we have
\[\lim_{r \decr 0} \omega_{\cc_F(r)}\big(\cc_{(0, 0)}(r)\big) = \lim_{r \decr 0} \Big((1 + r)^2 - \tfrac{e^{-(1 - 2r)}}{1 + e^{-(1 - 2r)}} 4 (1 + r)\Big) = 1 - \tfrac{4 e^{-1}}{1 + e^{-1}} = \tfrac{1 - 3 e^{-1}}{1 + e^{-1}}\]
and
\[\lim_{r \decr 0} \omega_{\cc_F(r)}\big(\cc_{(1, 0)}(r)\big) = \lim_{r \decr 0} \Big((1 + r)^2 - \tfrac{e^{-(1 - 2r)}}{1 + e^{-(1 - 2r)}} (1 + r)\Big) = 1 - \tfrac{e^{-1}}{1 + e^{-1}} = \tfrac{1}{1 + e^{-1}};\]
by symmetry we get the same also for other points in $F \setminus \set{(0, 0)}$. By Lemma~\ref{lemma:weight-limits}, these should be the components of the weighting of~$F$. This we can verify directly; by symmetry, the weighting of~$F$ should be of the form $w = (x, y, y, y, y)$, and we get the equations $x + 4 y e^{-1} = 1$, $x e^{-1} + y (1 + 3 e^{-2}) = 1$. Solving this indeed gives us $x = \tfrac{1 - 3 e^{-1}}{1 + e^{-1}}$ and $y = \tfrac{1}{1 + e^{-1}}$. Note that the weight of the point $(0, 0)$ is negative; this necessitated that the coefficients belonging to the proper faces of the cube $\cc_{(0, 0)}(r)$ (at least for small~$r$) were relatively large (their average needs to be larger than one over the number of such faces).

Of course, all of this then also implies $\lim_{r \decr 0} \mg\big(\cc_F(r)\big) = \mg(F) = \tfrac{5 - 3 e^{-1}}{1 + e^{-1}}$.
\end{example}

\begin{example}
In Theorem~\ref{theorem:weight-measure-of-union-of-cubes}, we gave our formula for the weight measure under the assumption that the cube radius is smaller than the coordinate half-gap. We claim that this assumption is relevant; we do not just want the cubes to be disjoint, but also their different projections onto axes. We demonstrate that with the following example.

Take $F := \set{(0, 0), (3, 1)} \subseteq \ell_1^2$. Then $\chg(F) = \tfrac{1}{2}$, but let us take $r = 1$.
\begin{center}
\begin{tikzpicture}[scale = 1]
\def\r{1}
\def\pointsize{1.5pt}
\filldraw[fill = black!10, thick] (-\r, -\r) rectangle (\r, \r);
\filldraw[fill = black!10, thick] (3-\r, 1-\r) rectangle (3+\r, 1+\r);
\filldraw (-\r, -\r) circle (\pointsize);
\filldraw (\r, -\r) circle (\pointsize);
\filldraw (-\r, \r) circle (\pointsize);
\filldraw (\r, \r) circle (\pointsize);
\filldraw (3-\r, 1-\r) circle (\pointsize);
\filldraw (3-\r, 1+\r) circle (\pointsize);
\filldraw (3+\r, 1-\r) circle (\pointsize);
\filldraw (3+\r, 1+\r) circle (\pointsize);
\end{tikzpicture}
\end{center}
Clearly, the condition on the projections of cubes is not satisfied; the projections of the two cubes in $\cc_F(r)$ onto the $y$-axis are $\intcc{-1}{1}$ and $\intcc{0}{2}$, so they are different, but they still intersect.

We have managed to guess what the weight measure of $\cc_F(r)$ is in this case (comparing with Example~\ref{example:pair} helps); we claim
\[\omega_{\cc_F(r)} = \omega_{\intcc{-1}{1} \times \intcc{-1}{1}} + \omega_{\intcc{2}{4} \times \intcc{0}{2}} - \tfrac{e^{-1}}{1 + e^{-1}} \big(\omega_{\set{1} \times \intcc{0}{1}} + \omega_{\set{2} \times \intcc{0}{1}}\big).\]
Due to symmetry, it suffices to verify the condition for weight measure just for points in $\intcc{-1}{1} \times \intcc{-1}{1}$. First, take $q \in \intcc{-1}{1} \times \intcc{-1}{0}$. Using\footnote{Strictly speaking, $\set{1} \times \intcc{0}{1}$ and $\set{2} \times \intcc{0}{1}$ are not faces of cubes, but they are still products of compact intervals/points, and the argument is the same.} Lemma~\ref{lemma:face-weight-measure-integration}, we have
\begin{gather*}
\int_{\cc_F(r)} e^{-d_1(x, q)}\,d\omega_{\cc_F(r)}(x) \\
= e^{-d_1(\intcc{-1}{1} \times \intcc{-1}{1}, q)} + e^{-d_1(\intcc{2}{4} \times \intcc{0}{2}, q)} - \tfrac{e^{-1}}{1 + e^{-1}} \Big(e^{-d_1(\set{1} \times \intcc{0}{1}, q)} + e^{-d_1(\set{2} \times \intcc{0}{1}, q)}\Big) \\
= 1 + e^{-(2 - q_1 - q_2)} - \tfrac{e^{-1}}{1 + e^{-1}} \Big(e^{-(1 - q_1 - q_2)} + e^{-(2 - q_1 - q_2)}\Big) \\
= 1 + e^{-(2 - q_1 - q_2)} - \tfrac{e^{-1}}{1 + e^{-1}}\,e^{-(1 - q_1 - q_2)} \big(1 + e^{-1}\big) \\
= 1 + e^{-(2 - q_1 - q_2)} - e^{-(2 - q_1 - q_2)} = 1.
\end{gather*}
Similarly, for $q \in \intcc{-1}{1} \times \intcc{0}{1}$:
\begin{gather*}
\int_{\cc_F(r)} e^{-d_1(x, q)}\,d\omega_{\cc_F(r)}(x) \\
= e^{-d_1(\intcc{-1}{1} \times \intcc{-1}{1}, q)} + e^{-d_1(\intcc{2}{4} \times \intcc{0}{2}, q)} - \tfrac{e^{-1}}{1 + e^{-1}} \Big(e^{-d_1(\set{1} \times \intcc{0}{1}, q)} + e^{-d_1(\set{2} \times \intcc{0}{1}, q)}\Big) \\
= 1 + e^{-(2 - q_1)} - \tfrac{e^{-1}}{1 + e^{-1}} \Big(e^{-(1 - q_1)} + e^{-(2 - q_1)}\Big) \\
= 1 + e^{-(2 - q_1)} - \tfrac{e^{-1}}{1 + e^{-1}}\,e^{-(1 - q_1)} \big(1 + e^{-1}\big) \\
= 1 + e^{-(2 - q_1)} - e^{-(2 - q_1)} = 1.
\end{gather*}
Since the weight measure is unique (Proposition~\ref{proposition:uniqueness-of-weight-measure}), it is impossible for the given $\cc_F(r)$ to have a weight measure of the form as in Theorem~\ref{theorem:weight-measure-of-union-of-cubes}.
\end{example}

\section{Conclusion}

The following are the main results of this paper:
\begin{itemize}
\item
We gave a formula for the weight measure and magnitude of sufficiently nice unions of cubes in~$\ell_1^N$ (which includes unions of small cubes around finite subsets of~$\ell_1^N$), see Theorem~\ref{theorem:weight-measure-of-union-of-cubes}.
\item
We proved that magnitude is continuous at all finite subsets of~$\ell_1^N$, see Theorem~\ref{theorem:finite-magnitude-continuity}.
\end{itemize}
The crucial tool that we had to come up with to derive these results was the specific system of linear equations which determines the coefficients in the weight measure formula (the `Fragment System'), see Definition~\ref{definition:fragment-system}.

The weight measure formula in Theorem~\ref{theorem:weight-measure-of-union-of-cubes} is valid for all $r \in S_F \subseteq \intco{0}{\chg(F)}$, i.e.\ when the Fragment System has a unique solution. The proof of Theorem~\ref{theorem:weight-measure-of-union-of-cubes} actually tells us that every solution of the Fragment System yields a weight measure, which implies that we cannot have more than one solution, otherwise we would contradict Proposition~\ref{proposition:uniqueness-of-weight-measure}. That is, uniqueness of solutions of the Fragment System is guaranteed; but the question of existence remains. We have shown that a solution exists at least for $r$ in some neighborhood of~$0$ (Lemma~\ref{lemma:system-solvability}), and that was enough for the proof of magnitude continuity. But what happens for larger~$r$? We do not yet have an answer, and we pose this as the following question.

\begin{question}
Can we characterize the set~$S_F$, or at least give a good sufficient condition for $r \in S_F$? Is it even the case that the Fragment System is always solvable, i.e.\ $S_F = \intco{0}{\chg(F)}$?
\end{question}

We note that the Fragment System was solvable in every example we have tried thus far.


{\bf AI Statement} No AI was used to prepare this manuscript.

\bibliographystyle{plainurl}
\bibliography{magnitude}

\end{document}


\section{Systems}

As Theorem~\ref{theorem:weight-measure-of-union-of-cubes} below demonstrates, the weight measure of a (sufficiently nice) union of cubes can be given as a linear combination of the weight measures of the faces of the cubes, and the coefficients in this linear combinations are given in terms of solutions of certain systems of linear equations. This section is dedicated to study of these systems.

The strategy here is a more general version of the strategy in~\someref where we derived our results from the interplay between two systems of linear equations, dubbed `Vertex System' and `Corner System'. Likewise, we give two systems below: the `Face System' generalizes the Vertex System, and the `Fragment System' generalizes the Corner System.

\begin{definition}
For any $N \in \NN$, $F \in \ifsub(\ell_1^N)$ and $r \in \intco{0}{\chg(F)}$, we define the following two systems of linear equations for unknowns~$x_{p, s}$.
\begin{samepage}
\begin{center}
\underline{Face~System}:
\end{center}
\[\Bigg(\sum_{\substack{p \in F \\ s \in \dr}}\!\!x_{p, s}\,e^{-d_1(\face_{p, s}(r), q + r t)} \ \ = \sum_{p \in F \setminus \set{q}}\!\!\!\!e^{-d_1(\cc_p(r), q + r t)}\Bigg)_{q \in F,\,t \in \dr}\]
\end{samepage}
\begin{samepage}
\begin{center}
\underline{Fragment~System}:
\end{center}
\[\Bigg(\sum_{(p, s) \in \dfr{q, u}}\!\!\!\!x_{p, s}\,e^{-d_1(\face_{p, s}(r), q + r u)} \ \ = \ \sum_{p \in \fr{q, u}}\!\!e^{-d_1(\cc_p(r), q + r u)}\Bigg)_{q \in F,\,u \in \dr}\]
\end{samepage}

Additionally, denote by $A(r)$ and $B(r)$ the coefficient matrices of the Face System and Fragment System, respectively.

Both systems have $(3^N - 1) \cdot \card{F}$ equations and the same number of unknowns.
\end{definition}

\begin{lemma}\label{lemma:system-equivalence}
Let $N \in \NN$, $F \in \ifsub(\ell_1^N)$ and $r \in \intco{0}{\chg(F)}$.
\begin{enumerate}
\item
Every solution of the Fragment System is also a solution of the Face System.
\item
If $r > 0$, then the Face System and the Fragment System are equivalent.
\end{enumerate}
\end{lemma}

\begin{proof}
\
\begin{enumerate}
\item
Assume that $x_{p, s}$ are solutions of the Fragment System, which we shorten to $\big(L_{q, u} = R_{q, u}\big)_{q \in F,\,u \in \dr}$. Then, for every $q \in F$ and $t \in \dr$ we have
\[\sum_{u \in \dr}\!\!e^{-d_1(q + r u, q + r t)} L_{q, u} \ = \sum_{u \in \dr}\!\!e^{-d_1(q + r u, q + r t)} R_{q, u}.\]
Let us evaluate both sides. We use Lemma~\ref{lemma:dotted-fragments} for the left-hand side:\note{napaka}
\begin{gather*}
\sum_{u \in \dr}\!\!e^{-d_1(q + r u, q + r t)} L_{q, u} = \sum_{u \in \dr}\!\!\bigg(e^{-d_1(q + r u, q + r t)}\!\!\!\!\sum_{(p, s) \in \dfr{q, u}}\!\!\!\!x_{p, s}\,e^{-d_1(\face_{p, s}(r), q + r u)}\bigg) = \\
= \sum_{u \in \dr} \sum_{(p, s) \in \dfr{q, u}}\!\!\!\!x_{p, s}\,e^{-d_1(\face_{p, s}(r), q + r t)} = \sum_{\substack{p \in F \\ s \in \dr}}\!\!x_{p, s}\,e^{-d_1(\face_{p, s}(r), q + r t)}.
\end{gather*}
Use Lemma~\ref{lemma:fragments} for the right-hand side:
\begin{gather*}
\sum_{u \in \dr}\!\!e^{-d_1(q + r u, q + r t)} R_{q, u} \ = \sum_{u \in \dr} \bigg(e^{-d_1(q + r u, q + r t)} \sum_{p \in \fr{q, u}}\!\!e^{-d_1(\cc_p(r), q + r u)}\bigg) = \\
= \sum_{u \in \dr} \sum_{p \in \fr{q, u}}\!\!e^{-d_1(\cc_p(r), q + r t)} \ = \sum_{p \in F \setminus \set{q}}\!\!\!\!e^{-d_1(\cc_p(r), q + r t)}.
\end{gather*}
We conclude that $x_{p, s}$ also solve the Face System.
\item
Conversely, assume that
\[\sum_{\substack{p \in F \\ s \in \dr}}\!\!\!\!x_{p, s}\,e^{-d_1(p + r s, q + r t)} \ \ = \sum_{p \in F \setminus \set{q}}\!\!\!\!e^{-d_1(\cc_p(r), q + r t)}\]
for every $q \in F$ and $t \in \dr$. By the above calculation this is the same as
\[\sum_{u \in \dr}\!\!\!\!e^{-d_1(q + r u, q + r t)} L_{q, u} \ =\!\!\sum_{u \in \dr}\!\!\!\!e^{-d_1(q + r u, q + r t)} R_{q, u}.\]
Since $r > 0$, in matrix form this states
\[Z_{\dotted{\set{q}}{r}} \cdot L_q = Z_{\dotted{\set{q}}{r}} \cdot R_q\]
As the similarity matrix $Z_{\dotted{\set{q}}{r}}$ is positive definite, hence invertible, we may cancel it to obtain $L_q = R_q$. In conclusion, solutions of the Face System are also solutions of the Fragment System.
\end{enumerate}
\end{proof}

\begin{lemma}\label{lemma:fragment-system-solvable-at-zero}
Let $N \in \NN$ and $F \in \ifsub(\ell_1^N)$. Then $B(0)$ (the coefficient matrix of the Fragment System for $r = 0$) is invertible.
\end{lemma}

\begin{proof}
Since $B(0)$ is a square matrix, is suffices to show that its kernel is trivial. Let us assume that $x = (x_{p, s})_{p \in F, s \in \dr}$ is a solution of $B(0)\,x = 0$. From this we will derive that $x = 0$.

Let us introduce some notation. For any $u \in \set{-1, 0, 1}^N$, define
\begin{align*}
I_u &:= \set[1]{t \in \set{-1, 0, 1}^N}{\all{k \in \intcc[\NN]{1}{N}}{u_k \neq 0 \impl t_k = u_k}}, \\
J_u &:= \set[1]{t \in \set{-1, 0, 1}^N}{\all{k \in \intcc[\NN]{1}{N}}{u_k \neq 0 \impl t_k \in \set{0, u_k}}}, \\
K_u &:= \set[1]{t \in \set{-1, 0, 1}^N}{\all{k \in \intcc[\NN]{1}{N}}{u_k \neq 0 \impl t_k = 0}}.
\end{align*}
Let $g := \frac{\chg(F)}{2}$. For every $p \in F$ and $u \in \set{-1, 0, 1}^N$ denote
\[y_{p, u} \ :=\!\!\!\!\!\!\sum_{\substack{s \in \dr \\ p + g s \in \face_{p, u}(g)}}\!\!\!\!\!\!\!\!x_{p, s}.\]
We claim that all $y_{p, u}$ are~$0$. We will prove this by induction on~$\fdim_u$, starting at the top dimension and going down.

By assumption we have
\[\sum_{(p, s) \in \dfr{q, u}}\!\!\!\!x_{p, s}\,e^{-d_1(p, q)} = 0\]
for every $q \in F$ and $u \in \dr$. Sum these equations for a fixed~$q$ over all~$u$. We get
\[0 =\!\!\sum_{u \in \dr} \sum_{(p, s) \in \dfr{q, u}}\!\!\!\!x_{p, s}\,e^{-d_1(p, q)} = \sum_{\substack{p \in F \\ s \in \dr}}\!\!x_{p, s}\,e^{-d_1(p, q)} = \sum_{p \in F} y_{p, 0}\,e^{-d_1(p, q)}.\]
In matrix form, that is $Z_F \cdot y_{\ph, 0} = 0$. Since the similarity matrix~$Z_F$ is positive definite, therefore invertible, we conclude $y_{p, 0} = 0$ for all $p \in F$.

Take now any $q \in F$ and $u \in \dr$, and inductively assume that we already know $y_{p, t} = 0$ for all $p \in F$ and $t \in \set{-1, 0, 1}^N$ such that $\fdim_t > \fdim_u$. Consider the following sum of part of the Fragment System.
\begin{gather*}
0 = \sum_{t \in I_u} \sum_{(p, s) \in \dfr{q, t}}\!\!\!\!x_{p, s}\,e^{-d_1(p, q)} = \\
= \sum_{t \in J_u} \sum_{p \in \fr{q, t}} \sum_{\substack{s \in \dr \\ \all{k \in \intcc[\NN]{1}{N}}{u_k \neq 0 = t_k \impl s_k = u_k}}}\hspace{-9ex}x_{p, s}\,e^{-d_1(p, q)} = \\
= \sum_{t \in J_u} \sum_{p \in \fr{q, t}} y_{p, (\begin{cases} u_k & \text{if $u_k \neq 0 = t_k$} \\ 0 & \text{otherwise} \end{cases})_{k \in \intcc[\NN]{1}{N}}}\,e^{-d_1(p, q)} = \\
= \sum_{t \in K_u} \sum_{p \in \fr{q, t}} y_{p, (\begin{cases} u_k & \text{if $u_k \neq 0 = t_k$} \\ 0 & \text{otherwise} \end{cases})_{k \in \intcc[\NN]{1}{N}}}\,e^{-d_1(p, q)} = \\
= \sum_{t \in K_u} \sum_{p \in \fr{q, t}} y_{p, u}\,e^{-d_1(p, q)} = \\
= \sum_{\substack{p \in F \\ \all{k \in \intcc[\NN]{1}{N}}{u_k \neq 0 \impl p_k = q_k}}} y_{p, u}\,e^{-d_1(p, q)}\\
\end{gather*}
\note{treba lepše in bolj podrobno napisati}

This gives us a homogeneous system whose coefficient matrix is the similarity matrix of a subset of~$F$, so positive definite, hence invertible, and we conclude $y_{q, u} = 0$.

We now show that all $x_{p, s}$ are~$0$ by induction on~$\fdim_s$, this time in increasing order. If $\fdim_s = 0$, then $x_{p, s} = y_{p, s} = 0$.

Take any $q \in F$ and $u \in \dr$, and inductively assume that $x_{p, s} = 0$ for all $p \in F$ and $s \in \dr$ such that $\fdim_s < \fdim_u$. Then
\[x_{q, u} = y_{q, u} - \!\!\!\!\!\!\!\!\sum_{\substack{s \in \dr \setminus \set{u} \\ q + g s \in \face_{p, u}(g)}}\!\!\!\!\!\!\!\!\!\!x_{p, s} = 0 - \!\!\!\!\!\!\!\!\sum_{\substack{s \in \dr \setminus \set{u} \\ q + g s \in \face_{p, u}(g)}}\!\!\!\!\!\!\!\!\!\!0 = 0.\]
\end{proof}

\begin{lemma}\label{lemma:system-solvability}
The following holds for any $N \in \NN$ and $F \in \ifsub(\ell_1^N)$.
\begin{enumerate}
\item
There exists $\rho \in \intoc{0}{\chg(F)}$ such that for every $r \in \intco{0}{\rho}$ the Fragment System has a unique solution (hence, by Lemma~\ref{lemma:system-equivalence}, the Face System has a unique solution for every $r \in \intoo{0}{\rho}$).
\item
Let $w = (w_p)_{p \in F}$ be the weighting of~$F$, $(x_{p, s})_{p \in F, s \in \dr}$ the solution of the Fragment System for $r = 0$, and $y_p := \sum_{s \in \dr} x_{p, s}$. Then $y_p = 1 - w_p$ for all $p \in F$, and
\[\sum_{\substack{p \in F \\ s \in \dr}}\!\!x_{p, s} \ = \ \sum_{p \in F} y_p \ = \ \card{F} - \mg(F).\]
\end{enumerate}
\end{lemma}

\begin{proof}
\
\begin{enumerate}
\item
Clearly $B\colon \intco{0}{\chg(F)} \to \RR^{(3^N - 1) \card{F} \times (3^N - 1) \card{F}}$ is continuous, and by Lemma~\ref{lemma:fragment-system-solvable-at-zero} $\det{B(0)} \neq 0$. Hence there exists $\rho \in \intoc{0}{\chg(F)}$ such that $\det{B(r)} \neq 0$ for all $r \in \intco{0}{\rho}$. Therefore the Fragment System is uniquely solvable for these~$r$.
\item
By assumption we have
\[\sum_{(p, s) \in \dfr{q, u}}\!\!\!\!x_{p, s}\,e^{-d_1(p, q)} = \sum_{p \in \fr{q, u}}\!\!e^{-d_1(p, q)}\]
for all $q \in F$, $u \in \dr$. Summing these equations over~$u$ at a fixed~$q$ gives us
\[\sum_{u \in \dr} \sum_{(p, s) \in \dfr{q, u}}\!\!\!\!x_{p, s}\,e^{-d_1(p, q)} = \sum_{\substack{p \in F \\ s \in \dr}}\!\!x_{p, s}\,e^{-d_1(p, q)} = \sum_{p \in F} y_p\,e^{-d_1(p, q)}\]
on the left-hand side, and
\begin{gather*}
\sum_{u \in \dr} \sum_{p \in \fr{q, u}}\!\!e^{-d_1(p, q)} = \sum_{p \in F \setminus \set{q}}\!\!e^{-d_1(p, q)} = \Big(\sum_{p \in F} e^{-d_1(p, q)}\Big) - 1 = \\
= \Big(\sum_{p \in F} e^{-d_1(p, q)}\Big) - \sum_{p \in F} e^{-d_1(p, q)} w_p = \sum_{p \in F} e^{-d_1(p, q)} (1 - w_p)
\end{gather*}
on the right-hand side. We can rewrite this in matrix form as
\[Z_F \cdot y = Z_F \cdot (1_{\card{F}} - w),\]
and since the similarity matrix~$Z_F$ is positive definite, therefore invertible, we may cancel it to obtain $y = 1_{\card{F}} - w$. Hence
\[\sum_{\substack{p \in F \\ s \in \dr}}\!\!x_{p, s} = \sum_{p \in F} y_p = \sum_{p \in F} (1 - w_p) = \Big(\sum_{p \in F} 1\Big) - \Big(\sum_{p \in F} w_p\Big) = \card{F} - \mg(F).\]
\end{enumerate}
\end{proof}